\documentclass[12pt, a4paper]{amsart}
\usepackage{graphicx} % Required for inserting images
\usepackage{amsthm}
\usepackage{amssymb}
\usepackage{amsmath}
\usepackage{placeins}
\usepackage{float} % in preamble
\usepackage{tikz}
\usepackage[colorlinks=true, allcolors=blue]{hyperref}
\newcounter{theorem}
\numberwithin{theorem}{section}
\numberwithin{equation}{section}
\newtheorem{thm}[theorem]{Theorem}
\newtheorem*{thm*}{Theorem \thesubsection}
\newtheorem{lemma}[theorem]{Lemma}
\newtheorem{prop}[theorem]{Proposition}
\newtheorem{cor}[theorem]{Corollary}
\newtheorem*{lemma*}{Lemma \thesubsection}
\newtheorem*{prop*}{Proposition \thesubsection}
\newtheorem*{cor*}{Corollary \thesubsection}
\theoremstyle{definition}
\newtheorem{definition}[theorem]{Definition}
\newtheorem*{definition*}{Definition \thesubsection}

\newtheorem*{example*}{Example \thesubsection}
\theoremstyle{remark}

\newtheorem*{remark*}{Remark \thesubsection}

\hypersetup{
  pdftitle={BINDING NUMBERS OF TIGHT CONTACT STRUCTURES ON
L(n, 1)},
  pdfauthor={Csaba Daniel Farkas}
}
\def\sideremark#1{\ifvmode\leavevmode\fi\vadjust{\vbox to0pt{\vss% the remark
 \hbox to 0pt{\hskip\hsize\hskip1em%                          will appear only
 \vbox{\hsize3cm\tiny\raggedright\pretolerance10000%          on the side
  \noindent #1\hfill}\hss}\vbox to8pt{\vfil}\vss}}}%
\title{Binding Numbers of Tight Contact Structures on $L(n,1)$}
\author{Csaba Daniel Farkas}

\begin{document}

\maketitle

\begin{abstract}
    We study binding numbers of tight contact structures on the lens spaces $L(n,1)$. Using the $d_3-$invariant, together with restrictions on planar monodromy factorizations and the Durst--Kegel algorithm for computing $d_3$ from open books, we obtain lower bounds for the number of binding components of planar open books supporting these contact structures. As a consequence, we compute the binding number of the universally tight contact structures on $L(n,1)$, showing that it is equal to $n$.
\end{abstract}

\section{Introduction}
Giroux \cite{Giroux02} proved that all contact structures on closed oriented $3$-manifolds are obtained from open book decompositions. This enabled the study of contact structures through the open books supporting them, thus establishing open book decompositions as highly relevant in investigating contact structures. In this vein, Etnyre--Ozbagci \cite{etnyre2006invariantscontactstructuresopen} define invariants such as support genus, support norm and binding number of contact structures. 

The support genus of a contact structure is the smallest genus of a page of
a supporting open book. Once this genus is fixed, the binding number records
the smallest possible number of boundary components of such a page. Thus, for
a planar contact structure, the binding number is the smallest number of
binding components among all planar open books supporting it.
Although planarity is known in many situations---for instance, every overtwisted
contact structure is planar \cite{EtnyrePlanar}---exact computations of binding
numbers for tight contact structures remain comparatively scarce.

\begin{definition}
    For $n>1$, we denote by $(L(n,1),\xi^n_\rho)$ the tight contact $3-$manifold, where $\xi^n_\rho$ is the tight contact structure obtained by performing a contact $(-1)$-surgery on a $\operatorname{tb}=-(n-1)$ Legendrian unknot in $(S^3,\xi_{\text{std}})$ with $\operatorname{rot}=\rho$.
\end{definition}

In this paper, we study the binding numbers of the tight contact structures
$\xi_\rho^n$ on $L(n,1)$. Our main result gives a lower bound depending on
$|\rho|$. The proof begins with a hypothetical planar supporting open book
with $d$ binding components. Results of Wendl \cite{Wendl_2010} and
Plamenevskaya--Van Horn-Morris \cite{Plamenevskaya_2010} constrain the possible positive
factorizations of its monodromy, while the Durst--Kegel algorithm computes the
$d_3$--invariant from the corresponding open-book data. Comparing this value
with the $d_3$--invariant of $\xi_\rho^n$ yields a lower bound for $d$.

Our main theorems are the following.

\begin{thm}\label{thm::bn}
    For $n>1$ with $n\neq 4$, the tight contact structures $\xi^n_\rho$ on $L(n,1)$ have $$\operatorname{bn}(\xi^n_\rho)\geq \left\lceil 1+\frac{(1+|\rho|)^2}{n-1}\right\rceil.$$
\end{thm}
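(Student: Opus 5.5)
The plan is to start from a hypothetical planar open book $(\Sigma,\phi)$ supporting $\xi^n_\rho$ whose page $\Sigma$ has $d$ boundary components, and to derive the inequality $(d-1)(n-1)\ge (1+|\rho|)^2$, which is the stated bound after rearranging. Since $\xi^n_\rho$ is Stein fillable, Wendl's theorem \cite{Wendl_2010} says every Stein filling of $(L(n,1),\xi^n_\rho)$ is the total space of a planar Lefschetz fibration with fibre $\Sigma$. In particular $\phi$ admits a positive factorization $\phi=\tau_{c_1}\cdots\tau_{c_k}$ into Dehn twists along curves on $\Sigma$.

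The first step is to pin down the topology of this filling $X$. For $n\neq 4$ the Stein (in fact minimal symplectic) fillings of $L(n,1)$ are unique up to diffeomorphism: the only one is the $D^2$-bundle over $S^2$ with Euler number $-n$. The exclusion $n=4$ is exactly because of the extra rational ball filling of $L(4,1)$, and this is where the Plamenevskaya--Van Horn-Morris analysis \cite{Plamenevskaya_2010} of planar factorizations enters. Hence $\chi(X)=2$ and $\sigma(X)=-1$. Since $\chi(X)=\chi(\Sigma)+k=2-d+k$, the factorization has exactly $k=d$ vanishing cycles. Moreover $H_2(X)\cong\mathbb{Z}$ is generated by a class $\Sigma_0$ with $\Sigma_0^2=-n$.

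On the other side, the $d_3$--invariant of $\xi^n_\rho$ is computed from the obvious Stein filling as $d_3=\tfrac14(c^2-3\sigma-2\chi)+q$, with $c^2=-\rho^2/n$. Because the two fillings agree topologically, comparing $d_3$ forces $\langle c_1(X,J),\Sigma_0\rangle=\pm\rho$ for the Stein structure coming from the Lefschetz fibration.

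Next I will run the Durst--Kegel algorithm on $(\Sigma,\phi)$. Draw $\Sigma$ as a disk with $d-1$ holes, realize each hole as a $0$-framed (dotted or surgered) unknot, and realize each vanishing cycle $c_i$ as a Legendrian curve on the page. If $c_i$ encloses a set $S_i$ of holes with $|S_i|=m_i$, then $c_i$ can be realized with rotation number $r_i$ satisfying $|r_i|\le m_i-1$ and $r_i\equiv m_i-1 \pmod 2$. The algorithm then expresses $c_1(X,J)$ through the vector $(r_i)$ and the linking matrix $Q$ built from the incidences between the $c_i$ and the holes.

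The generator $\Sigma_0$ is an integer vector $x=(x_1,\dots,x_d)$ in the kernel of the map from vanishing cycles to holes. This gives the relations $\sum_{i:\,h\in S_i}x_i=0$ for each hole $h$, together with $-\sum x_i^2=-n$ after the appropriate correction. I then expect:
\begin{itemize}
\item $n-1=\sum_i x_i^2-1$, or a closely related quadratic expression, should be comparable to $\sum_h\bigl(\sum_{i:\,h\in S_i}|x_i|\bigr)^2/(\cdots)$;
\item $|\rho|=|\sum_i r_ix_i|\le\sum_i(m_i-1)|x_i|$.
\end{itemize}
Applying Cauchy--Schwarz over the $d-1$ holes should turn these into $(1+|\rho|)^2\le (d-1)(n-1)$.

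The main obstacle is this last combinatorial inequality. One has to control simultaneously the rotation numbers allowed by the containment pattern of the $c_i$ and the self-intersection of the unique homology class. The kernel condition and the parity constraint $r_i\equiv m_i-1$ must be used carefully to obtain the ``$+1$'' in $(1+|\rho|)^2$, so I expect to need a case distinction. I would first try to treat the case where some vanishing cycle is boundary-parallel separately, since boundary twists contribute nothing to $H_2$. I would then apply Cauchy--Schwarz only to the non-boundary-parallel cycles, whose number is at most $d-1$.
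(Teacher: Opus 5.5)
The setup is sound and matches the paper's. Wendl's theorem plus uniqueness of the filling for $n\neq 4$ gives exactly $k=d$ vanishing cycles. The generator of $H_2(X)\cong\mathbb{Z}$ is a primitive $x\in\ker A$ with $\sum_i x_i^2=n$, and comparing $d_3$ gives $|\rho|=|\sum_i r_ix_i|$. Reading $\sigma=-1$ and $\Sigma_0^2=-n$ off the known filling is a legitimate shortcut for the paper's matrix computations (Haynsworth for $\sigma(Q)$, Cauchy--Binet with $\det(AA^T)=n$).

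\textbf{The gap.} It lies in the step you flag, and it is fatal: you only use $|r_i|\le m_i-1$ and $r_i\equiv m_i-1 \pmod 2$, and these constraints cannot give the theorem. Take $d=4$ with vanishing cycles enclosing the hole sets $\{1,2\},\{2,3\},\{1,3\},\{1,2,3\}$. Then $\ker A$ is spanned by $x=(1,1,1,-2)$, so $n=\sum_i x_i^2=7=\det(AA^T)$. The choice $r=(1,1,-1,-2)$ satisfies all your constraints and gives $|\sum_i r_ix_i|=5$ (an admissible value of $|\rho|$ for $n=7$). But $(1+5)^2=36>18=(d-1)(n-1)$. So no Cauchy--Schwarz argument can close the proof from the inequalities you list.

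\textbf{The missing idea.} The $r_i$ are not free parameters. In the standard planar picture, a positively oriented curve enclosing $m_i$ holes has rotation number exactly $m_i-1$. This is Lemma~\ref{lemma::r}, proved by comparing with the $S^3$ open book $D^+_{\beta_{d-1}}\circ\cdots\circ D^+_{\beta_1}$, where the correction $\langle\mathbf a,\mathbf r\rangle$ vanishes. Hence $r=A^T\mathbf{1}_{d-1}-\mathbf{1}_d$, and the kernel relations give $\sum_i r_ix_i=(Ax)\cdot\mathbf{1}_{d-1}-\sum_i x_i=-\sum_i x_i$. So $|\rho|=|\sum_i x_i|$; in the example above the true $r$ is $(1,1,1,2)$ and $|\rho|=1$.

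\textbf{Where the $+1$ comes from.} It comes from signs, not parity. Every column of $A$ is a nonzero $0/1$ vector and $Ax=0$, so $x$ has entries of both signs. Normalize so that $\sum_i x_i=|\rho|$. Then:
the positive entries sum to at least $|\rho|+1$;
there are at most $d-1$ positive entries;
their squares sum to at most $n-1$.
Cauchy--Schwarz over the positive entries (not over the holes) then gives $(|\rho|+1)^2\le(d-1)(n-1)$.

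\textbf{The boundary-parallel reduction.} Your planned reduction rests on a false premise: boundary-parallel twists do contribute to $H_2$. A curve parallel to an inner boundary component has nonzero class $\beta_h\in H_1(\Sigma)$. On the pair of pants every essential curve is boundary parallel, yet $D^+_{\gamma_3}\circ D^+_{\gamma_2}\circ D^+_{\gamma_1}$ has $H_2$ generated by $(1,1,-1)$. Discarding such cycles would discard everything.
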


This lower bound is weak in general, but in several cases it is strong enough to determine the binding number exactly.

As an immediate corollary, we obtain:
\begin{cor}\label{cor::untight}
    The universally tight contact structures on $L(n,1)$ for $n\neq4$ have binding number equal to $n$.
\end{cor}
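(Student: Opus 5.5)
The universally tight contact structures on $L(n,1)$ are the ones with extremal rotation number, $\rho=\pm(n-2)$. Indeed, the Legendrian unknot with $\operatorname{tb}=-(n-1)$ has $n-2$ stabilizations, so $\rho\in\{-(n-2),-(n-4),\dots,n-2\}$. The universally tight structures are exactly those where all stabilizations have the same sign; the two choices of sign give contactomorphic structures, related by an orientation-reversing diffeomorphism of the surgery picture. The plan has two parts: read off the lower bound $\operatorname{bn}\ge n$ from Theorem \ref{thm::bn}, and exhibit a planar open book with exactly $n$ binding components supporting $\xi^n_{\pm(n-2)}$.

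\textbf{Lower bound.} Take $|\rho|=n-2$ in Theorem \ref{thm::bn}. Then
\[
1+\frac{(1+|\rho|)^2}{n-1}=1+\frac{(n-1)^2}{n-1}=n,
\]
so for $n\neq 4$ we get $\operatorname{bn}(\xi^n_{\pm(n-2)})\ge \lceil n\rceil=n$. The exclusion $n\neq 4$ is inherited directly from Theorem \ref{thm::bn}.

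\textbf{Upper bound.} I would use the standard construction of planar open books for Legendrian surgery on stabilized unknots.
\begin{enumerate}
\item Start from the annulus open book of $(S^3,\xi_{\mathrm{std}})$, with monodromy one positive Dehn twist about the core. Its core curve realizes the Legendrian unknot with $\operatorname{tb}=-1$.
\item Stabilize $n-2$ times, each time attaching a $1$-handle on the same boundary side and adding a positive Dehn twist. Each stabilization stabilizes the Legendrian once, and keeping them all on the same side makes all the Legendrian stabilizations of the same sign.
\item The result is a planar page with $n$ boundary components, namely a disk with $n-1$ holes. The Legendrian unknot with $\operatorname{tb}=-(n-1)$ and $\operatorname{rot}=\pm(n-2)$ sits on it as the curve enclosing all $n-1$ holes.
\item Contact $(-1)$-surgery adds one positive Dehn twist about this curve. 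The resulting planar open book has $n$ binding components and supports $\xi^n_{\pm(n-2)}$.
\end{enumerate}
Equivalently, the monodromy is a twist about each of the $n-1$ holes together with one twist about the outer boundary. This is the familiar open book of the boundary of the $D^2$-bundle of Euler number $-n$, i.e.\ the Milnor-fiber or link picture for $L(n,1)$ with its canonical (universally tight) structure.

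Combining the two bounds gives $\operatorname{bn}=n$. The only point needing care is checking that the same-side stabilizations yield $|\operatorname{rot}|=n-2$, which is the standard rotation-number bookkeeping on planar pages, and that these are exactly the universally tight structures.
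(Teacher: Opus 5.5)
Your proposal is correct and follows the paper's proof. The lower bound is Theorem~\ref{thm::bn} with $|\rho|=n-2$, and the upper bound is the construction of Section~\ref{sec::upper} with all $n-2$ stabilizations of the same sign. Your identification of the resulting open book as the sphere with $n$ holes, with one positive twist about each boundary component, is accurate.

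One aside is misphrased: $\xi^n_{\pm(n-2)}$ are not related by an ``orientation-reversing diffeomorphism.'' The contactomorphism comes from reversing the orientation of the Legendrian unknot, which flips the sign of $\operatorname{rot}$ without changing the surgery. Nothing in your argument uses this, since the lower bound depends only on $|\rho|$ and the construction yields either sign directly.
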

\begin{proof}
    The universally tight contact structures are exactly the ones where $\rho=\pm (n-2)$. Theorem \ref{thm::bn} yields the lower bound $\operatorname{bn}(\xi^n_{\pm (n-2)})\geq n$. The reverse inequality follows from the construction in Section \ref{sec::upper}.
\end{proof}

Furthermore, with methods developed in the proof of Theorem  \ref{thm::bn}, we also prove the following.

\begin{thm}\label{thm::bn56}
    For $n\leq 6$, every tight contact structure on \(L(n,1)\) has binding number \(n\). Furthermore, for $n\geq 7$, the following exact values hold:
\[
        \operatorname{bn}(\xi^n_\rho)=
        \begin{cases}
        n, & \text{for } |\rho|=n-4,n-2\\
        n-3, & \text{for } |\rho|=n-6.
        \end{cases}
\]
\end{thm}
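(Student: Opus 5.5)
The proof has two halves, an upper bound and a lower bound, handled separately. Throughout I use that the rotation numbers satisfy $\rho\equiv n\pmod 2$ and $|\rho|\le n-2$.

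\textbf{Upper bounds.} These come from explicit planar open books, as in Section \ref{sec::upper}. Start with the standard planar open book of $(S^3,\xi_{\text{std}})$ that contains the Legendrian unknot with $\operatorname{tb}=-(n-1)$ on a page. Stabilize $n-2$ times, with the signs chosen to realize rotation $\rho$. The unknot then lies on a planar page with $n$ boundary components. Contact $(-1)$-surgery adds one positive Dehn twist, which gives a planar open book with $n$ binding components for every $\xi^n_\rho$. So $\operatorname{bn}\le n$ always.

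For $|\rho|=n-6$ we want $n-3$ binding components. The idea is that when at least three stabilizations of each sign occur, a pair of opposite-sign stabilizations can be traded for a lantern-type configuration, saving components. The cleanest route is to write down directly a monodromy on a sphere with $n-3$ holes, built from boundary twists plus a few interior twists. Then check two things: that it gives $L(n,1)$, by computing the homology and the surgery diagram; and that its $d_3$-invariant matches that of $\xi^n_{\pm(n-6)}$, using Durst--Kegel. The case $n\le 6$ also needs the upper bound $n$ for all $\rho$, which is the general construction above.

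\textbf{Lower bounds.} For $|\rho|=n-2$ this is Corollary \ref{cor::untight} when $n\neq 4$. For $|\rho|=n-4$, Theorem \ref{thm::bn} only gives $\lceil 1+(n-3)^2/(n-1)\rceil = n-4+\lceil 5/(n-1)\rceil$, which is too weak. Here I would redo the argument from the proof of Theorem \ref{thm::bn} more carefully.

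Suppose there is a planar open book with $d$ binding components. By Wendl and Plamenevskaya--Van Horn-Morris, its monodromy is a positive factorization, and every boundary-parallel curve must appear. Since $H_1=\mathbb Z/n$, the number of interior twists is constrained. Feed the possible factorizations into the Durst--Kegel formula: $d_3$ is computed from the signature and Euler characteristic of the Stein filling and from $c^2$, where $c$ is determined by rotation data on the page. Then compare with
\[
d_3(\xi^n_\rho)=\frac{\rho^2-n}{4n}+\frac{1}{2}\quad\text{(up to the paper's normalization)}.
\]

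The key point is that for $d\le n-1$, the filling has $b_2$ small and $c^2$ bounded by a quantity like $-(1+|\rho|)^2/(d-1)$. I would sharpen this using integrality: write $c$ as an integer combination over the $d-1$ holes and minimize $\sum a_i^2$ subject to $\sum a_i$ being a fixed value with a fixed parity. Then rule out each $d\in\{n-3,\dots,n-1\}$ for $|\rho|=n-4$ by a finite case analysis.

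The cases $n\le 6$ and the excluded $n=4$ (with $\rho=0,\pm2$) are handled by this same finite enumeration. For $n=4$, $\rho=\pm 2$, I would also have to exclude $d=3$ by hand. For $|\rho|=n-6$, the lower bound $n-3$ comes from this sharpened integer estimate ruling out $d\le n-4$.

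\textbf{Main obstacle.} The hard step is this sharpened, integrality-aware lower bound. Theorem \ref{thm::bn} uses a continuous Cauchy--Schwarz estimate, and that estimate is not tight. Turning it into an exact statement for $|\rho|=n-4$ and $n-6$ means classifying which positive factorizations on a sphere with $d$ holes can produce $L(n,1)$. I would first try to show that the interior curves must be essentially nested or disjoint, so that the plumbing is linear, and then do the discrete optimization.
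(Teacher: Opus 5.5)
Your framework is right: Wendl plus uniqueness of the Stein filling gives a positive factorization with exactly $d$ twists, and then you compare $d_3$ via Durst--Kegel. But the two steps that carry the theorem are missing, and the lower-bound strategy you describe cannot work as stated.

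\textbf{Lower bounds.} Arithmetic constraints alone (a sum of squares with fixed sum and parity, Cauchy--Schwarz with integrality) cannot exclude small $d$. The paper's argument runs through a specific object you never introduce. Let $A$ be the binary $(d-1)\times d$ matrix whose $j$-th column records the holes enclosed by the $j$-th twist curve. Let $v\in\mathbb{Z}^d$ be the vector of signed maximal minors of $A$; it is indexed by the $d$ twists, not by the $d-1$ holes.
\begin{itemize}
\item $v$ spans $\ker A$, and Cauchy--Binet gives $\sum v_i^2=\det(AA^T)=|H_1(L(n,1))|=n$.
\item From $Ax=0$ and $r=A^T\mathbf{1}-\mathbf{1}$ (Lemma~\ref{lemma::r}) one gets $x=(S/n)v$ with $S=\sum v_i$.
\item With $\sigma(Q)=-1$ this gives $d_3=-\frac14(1+S^2/n)$, so comparison with $\xi^n_\rho$ yields $|S|=|\rho|$.
\end{itemize}
These two identities alone do not rule anything out. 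For $|\rho|=n-4$ they are satisfied by $(2,2,1,\dots,1)$ with $n-6$ nonzero entries and by $(2,1,\dots,1,-1)$ with $n-3$ nonzero entries, both compatible with $d\le n-1$. Likewise $(2,-1,-1)$ is arithmetically fine for $(n,\rho)=(6,0)$. What excludes them is $Av=0$ combined with the structure of $A$: entries in $\{0,1\}$, no zero column, and rank $d-1$.
\begin{itemize}
\item A nonnegative combination of nonzero columns cannot vanish.
\item $2C_1+C_2+\cdots=C_k$ forces an entry $\ge 2$ in $C_k$.
\item $2C_1=C_2+C_3$ forces $C_1=C_2=C_3$, which drops the rank.
\end{itemize}
Your enumeration never uses this constraint, so it has nothing to rule these cases out.

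The structural shortcuts you propose are also false. The paper's four-holed open book for $\xi^7_{\pm1}$ has twist curves enclosing $\{2,3\},\{2\},\{1,2\},\{1,3\}$, so not every boundary-parallel curve appears. Moreover, $AA^T$ is determined by the monodromy; here it has all off-diagonal entries $1$, and there are exactly four twists. So this monodromy admits no factorization by pairwise nested-or-disjoint curves. Any proof of ``linear plumbing'' would therefore contradict $\operatorname{bn}(\xi^7_{\pm1})=4$.

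\textbf{Upper bound for $|\rho|=n-6$.} The bound $\operatorname{bn}(\xi^n_{\pm(n-6)})\le n-3$ is only a heuristic in your proposal. ``Trading opposite-sign stabilizations for a lantern configuration'' is not a construction. Its stated premise also fails: for $|\rho|=n-6$ there are exactly two stabilizations of the minority sign, not three. The paper supplies an explicit four-holed open book for $(S^3,\xi_{\mathrm{std}})$ with a page curve $K$. Durst--Kegel shows $\operatorname{tb}(K)=-6$ and $\operatorname{rot}(K)=-1$, and Kirby calculus shows $K$ is unknotted. The paper then stabilizes $n-7$ times and performs Legendrian surgery.

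\textbf{Small cases and a sign slip.} The cases $n=2,3,4$ are settled in the paper by direct arguments, not by the enumeration you defer to:
\begin{itemize}
\item disk pages give only $S^3$;
\item the annular candidates are only $D_c$ and $D_c^2$, which give $S^3$ and $L(2,1)$;
\item on a pair of pants, $\det(AA^T)=n_1n_2+n_1n_3+n_2n_3\le 3<4$.
\end{itemize}
Your $d_3$ formula also has the wrong sign on $\rho^2$, since $c_1^2=-\rho^2/n$.
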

The exceptional case $\operatorname{bn}(\xi^4_0)=4$ was previously obtained by Etnyre--Ozbagci \cite{etnyre2006invariantscontactstructuresopen}.

We begin by recalling the necessary background on open books, Legendrian
knot invariants, and tight contact structures on lens spaces. We then 
construct supporting open books for these tight lens spaces and finish with a proof of the lower bounds.

\section{Preliminaries}\label{sec::prelim}

In this section we recall various facts about open book decompositions, we revisit the algorithm of Durst--Kegel \cite{durst2019computingrotationnumbersopen} to compute the Thurston--Bennequin and rotation number of a Legendrian knot in the page of an open book decomposition, we then turn to the classification and construction of tight contact structures on the lens spaces $L(n,1)$ \cite{HondaClassification}.

\subsection{Open books in contact topology}

We recall the standard conventions for open books and supported contact structures. For a detailed introduction we refer the reader to \cite{Geiges08} or \cite{OS04}. 

An abstract open book is a pair \((\Sigma,\varphi)\), where \(\Sigma\) is a
compact oriented surface with non-empty boundary and
\(\varphi \in \operatorname{Diff}^+(\Sigma,\partial\Sigma)\) is an
orientation-preserving diffeomorphism which is the identity near
\(\partial\Sigma\), considered up to isotopy relative to \(\partial\Sigma\).
The map \(\varphi\) is called the monodromy.

The associated closed oriented 3-manifold $M_{(\Sigma,\varphi)}$ is constructed as follows. First form
the mapping torus
\[
T_\varphi =
\frac{[0,1]\times \Sigma}{(1,x)\sim(0,\varphi(x))}.
\]
For each boundary component \(C\subset \partial\Sigma\), the boundary of
\(T_\varphi\) contains a torus \(S^1\times C\). We attach a solid torus
\(D^2\times C\) by identifying $\partial D^2\times C$ with $S^1\times C$. The union of the core circles \(\{0\}\times C\) of these attached solid tori is called the binding. The closures of the fibers of the natural map to \(S^1\) are called the pages.

A positive cooriented contact structure \(\xi\) on $M_{(\Sigma,\varphi)}$ is said to be supported by the abstract open book \((\Sigma,\varphi)\) if there exists a contact form \(\alpha\) for \(\xi\) such that \(\alpha\) is positive on the oriented binding components \(B\), and \(d\alpha\) restricts to a positive area form on every page. 

By the Thurston--Winkelnkemper construction \cite{TW75}, every abstract open book
\((\Sigma,\varphi)\) supports a positive cooriented contact structure on the
associated 3-manifold \(M_{(\Sigma,\varphi)}\). Giroux later showed that
this contact structure is unique up to isotopy, and conversely that every
positive cooriented contact structure on a closed oriented 3-manifold is
supported by some open book \cite{Giroux02}. More generally, we say that an abstract open book \((\Sigma,\varphi)\) supports a contact manifold \((M,\xi)\) if there is an orientation-preserving diffeomorphism $M_{(\Sigma,\varphi)} \cong M$
under which a contact structure supported by \((\Sigma,\varphi)\) is isotopic
to \(\xi\).

In order to study contact structures through their supporting open books, one may define the following invariants.
\begin{definition}\cite{etnyre2006invariantscontactstructuresopen}
The support genus \(\operatorname{sg}(\xi)\) of a contact structure \(\xi\) is
the minimal genus of a page among all open books supporting \(\xi\). The binding
number \(\operatorname{bn}(\xi)\) is the minimal number of binding components
among all supporting open books whose page genus equals \(\operatorname{sg}(\xi)\).
\end{definition}

\subsection{The Thurston-Bennequin Invariant and Rotation numbers in open books}\label{sec:invariants}

We recall the algorithm of Durst--Kegel \cite{durst2019computingrotationnumbersopen} for computing the Thurston-Bennequin invariant and the rotation numbers of Legendrian knots presented on pages of abstract open books. We also recall the computation of the $d_3$-invariant in this setting. In our setting, the algorithm simplifies considerably, since we only consider planar open books whose monodromy admits a factorization into positive Dehn twists.

\begin{definition} \label{def::basis}
Let $\Sigma$ be a planar surface with $d$ boundary components. We distinguish one boundary component as the outer boundary component and label the remaining $d-1$ boundary components by $1,\ldots,d-1$. Let $\beta_i$ be the positively oriented simple closed curve enclosing precisely the $i$-th inner boundary component; see Figure \ref{fig::stdOB}. Let $b_i$ be a properly embedded arc connecting the $i$-th inner boundary component to the outer boundary component.

We choose the orientations so that the arcs $b_i$ are pairwise disjoint, the curves $\beta_i$ are pairwise disjoint, and $\beta_i\cdot b_j=\delta_{ij}$.

Let $K\subset\Sigma$ be an oriented embedded closed curve which has been isotoped so that it intersects the arcs $b_i$ minimally. Reading $K$ once in its orientation, we form a cyclic word $w=\beta_{i_1}^{\varepsilon_1}\cdots \beta_{i_k}^{\varepsilon_k}$ in the letters $\langle \beta_1,\cdots, \beta_{d-1} \rangle$, where we write a $\beta_i$ at each positive intersection of $K$ with $b_i$ and a $\beta_i^{-1}$ at each negative intersection. Since \(w\) is a cyclic word, we set \(i_{k+1}=i_1\).
     Denote the places in the word $w$ where the index changes by $r_u$ $(r_d)$ if the index increases (decreases), where the transition from the last letter to the first letter is included. More precisely for each \(j\in\{1,\cdots,k\}\), define
\[
\alpha_j =
\begin{cases}
r_u, & i_{j+1 }>i_{j },\\
r_d, & i_{j+1 }<i_{j }.
\end{cases}
\]
     Then define $r(K)=D-U$ where $$D:=\#\{j\in\{1,\cdots,k\}: \varepsilon_j=+1 \text{ and } \alpha_j=r_d\}$$ is the number of places where a $\beta_i$ is followed by $r_d$ and $$U:=\#\{j\in\{1,\cdots,k\}: \varepsilon_j=-1 \text{ and } \alpha_j=r_u\}$$ is the number of places where a $\beta_i^{-1}$ is followed by $r_u$.
\end{definition}

\begin{figure}[h]
    \centering
    \includegraphics[width=0.8\textwidth]{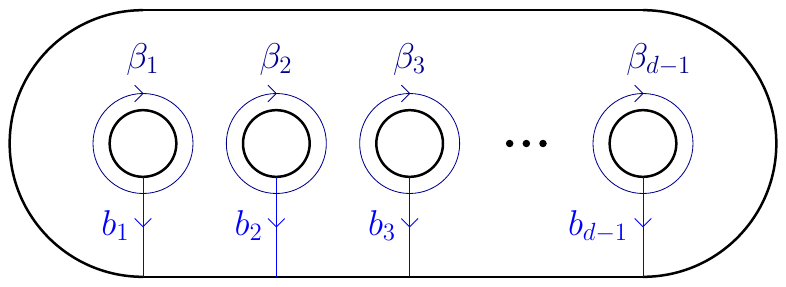}
    \caption{The open book $(\Sigma,D^+_{\beta_{d-1}}\circ\cdots\circ D^+_{\beta_1})$ supports $(S^3,\xi_{\mathrm{std}})$.}
    \label{fig::stdOB}
\end{figure}

Now let $(\Sigma,\varphi)$ be a planar open book with $d$ boundary components and a distinguished outer boundary component and bases $(\beta_i)_i,(b_i)_i$ for $H_1(\Sigma)$ and $H_1(\Sigma,\partial \Sigma )$ as in Definition \ref{def::basis}. Consider a monodromy $\varphi$ which admits a factorization $\varphi=D^+_{\alpha_{k}}\circ\cdots\circ D^+_{\alpha_1}$. Here each $\alpha_j$ is a simple closed curve in $\Sigma$, oriented as the positively oriented boundary of the component not containing the distinguished outer boundary. We associate to this factorization the integer matrix $A=(A_{\ell j})\in M_{(d-1)\times k}(\mathbb{Z})$, where $A_{\ell j}=\alpha_j\cdot b_\ell$ and hence
$$
[\alpha_j]=\sum_{\ell=1}^{d-1}A_{\ell j}[\beta_\ell]\in H_1(\Sigma;\mathbb{Z}).
$$
Equivalently, in the standard planar situation considered here, $A_{\ell j}=1$ if $\alpha_j$ encloses the $\ell$-th inner boundary component, and $A_{\ell j}=0$ otherwise. Following Durst--Kegel \cite{durst2019computingrotationnumbersopen}, define $Q\in M_{(d-1+k)\times (d-1+k)}$ by
$$
Q=\begin{pmatrix}
0_{(d-1)\times(d-1)}&-A\\
-A^T&-A^TA-I_k
\end{pmatrix}.
$$

Let $K\subset\Sigma$ be an embedded closed curve which is Legendrian realized on the page $\Sigma\times\{0\}$ in the contact manifold supported by $(\Sigma,\varphi)$. Denote by $\mathbf{l},\mathbf{r}\in \mathbb{Z}^{d-1+k}$ the vectors with 

\begin{align*}
    \mathbf{l}_j&=-(K\cdot b_j) \qquad \text{for } j\in \{1,\cdots, d-1\},\\
    \mathbf{l}_{d-1+j}&=-\sum_{i=1}^{d-1}(K\cdot b_i)(\alpha_j\cdot b_i) \qquad \text{for } j\in \{1,\cdots, k\},\\
    \mathbf{r}_j&=0 \qquad \text{for } j\in \{1,\cdots, d-1\},\\
    \mathbf{r}_{d-1+j}&=r(\alpha_j) \qquad \text{for } j\in \{1,\cdots, k\}.
\end{align*}

Now one considers the equations $Q\mathbf{a}=\mathbf{l}$ and $Q\mathbf{b}=\mathbf{r}$ for $\mathbf{a} \in \mathbb{Z}^{d-1+k},\mathbf{b}\in \mathbb{Q}^{d-1+k}$. The existence of an integral solution for $\mathbf{a}$ is equivalent to $K$ being nullhomologous. The existence of a solution for $\mathbf{b}$ is equivalent to $e(\xi)$ being torsion.
Then the invariants $\operatorname{tb}(K)$, $\operatorname{rot}(K)$, and $d_3(\xi)$ are computed via
\begin{align*}
    \operatorname{tb}(K)&=-\sum_{i=1}^{d-1}(K\cdot b_i)^2-\langle \mathbf{a},\mathbf{l}\rangle,\\
    \operatorname{rot}(K)&=r(K)-\langle \mathbf{a},\mathbf{r}\rangle,\\
    d_3(\xi)&=-1+\frac{d-k}{2}+\frac{1}{4}\langle \mathbf{b},\mathbf{r}\rangle - \frac{3}{4}\sigma(Q).
\end{align*}

Throughout, we use the convention for the $d_3$-invariant in which the standard tight contact structure on $S^3$ satisfies
$
d_3(\xi_{\mathrm{std}})=-\frac{1}{2}.$
With this convention, Gompf's \cite{gompf1998handlebodyconstructionsteinsurfaces} invariant $\theta$ satisfies $\theta(\xi)=4d_3(\xi)$.

The following will be important in the proof of Theorem \ref{thm::bn}.

\begin{lemma}\label{lemma::r}
Let $\Sigma$ be a planar surface with $d$ boundary components, with one boundary component distinguished as the outer boundary component. Choose an arc basis as in Figure \ref{fig::stdOB}. Let $K\subset\Sigma$ be an essential positively oriented simple closed curve with
$$
[K]=\sum_{i=1}^{d-1}\varepsilon_i[\beta_i],
$$
where $\varepsilon_i\in\{0,1\}$. Then
$$
r(K)=\sum_{i=1}^{d-1}\varepsilon_i-1.
$$
\end{lemma}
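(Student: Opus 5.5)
The plan is to read off the cyclic word of $K$ directly and then count the letters followed by a descent. In the standard planar picture of Figure \ref{fig::stdOB}, the inner boundary components sit in a row, and each arc $b_i$ runs from the $i$-th inner boundary component to the outer one, with the arcs pairwise disjoint. A positively oriented simple closed curve $K$ with $[K]=\sum_i \varepsilon_i[\beta_i]$ and $\varepsilon_i\in\{0,1\}$ bounds a disk-with-holes containing exactly the inner boundary components indexed by $S=\{i:\varepsilon_i=1\}$. Since $K$ is essential, $S\neq\emptyset$; if $|S|=1$ then $K$ is isotopic to $\beta_i$.

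First step: put $K$ in minimal position with respect to the arcs. I will argue that after isotopy $K$ meets $b_i$ exactly once for $i\in S$ and misses $b_i$ for $i\notin S$. The algebraic intersection $K\cdot b_i=\varepsilon_i$ gives the lower bound on the number of intersections. For the upper bound, cut $\Sigma$ along all the arcs $b_j$; the result is a disk. In that disk I can realize $K$ as the boundary of a regular neighbourhood of the union of the holes in $S$ together with the segments of the outer boundary between them. This curve crosses each $b_i$ with $i\in S$ exactly once, and does so positively because $K$ is positively oriented, so $\beta_i\cdot b_i=1$ fixes the sign.

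Second step: identify the cyclic word. Because $K$ encircles the holes in the same rotational order in which the arcs emanate, reading $K$ once gives the word $w=\beta_{s_1}\beta_{s_2}\cdots\beta_{s_m}$ with $s_1<s_2<\cdots<s_m$ the elements of $S$. All exponents are $+1$, so $U=0$. The index increases at every cyclic transition except the wrap-around $s_m\to s_1$, which is the unique $r_d$. Hence $D=1$ when $m\ge 2$ and $r(K)=1$.

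This clashes with the claimed formula $m-1$ unless the orientation and ordering conventions make the word read in decreasing order. In that case every transition except the wrap-around is a descent, giving $D=m-1$. It is also consistent with $r(\beta_i)=0$: for $m=1$ the word is the single letter $\beta_i$, and since $i_{k+1}=i_1$ there is no index change, so $D=0$.

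The main obstacle is therefore this step: checking carefully, with the orientation conventions of Figure \ref{fig::stdOB} (positive boundary orientation of the inner region and $\beta_i\cdot b_j=\delta_{ij}$), that a counterclockwise traversal meets the arcs $b_{s_m},\dots,b_{s_1}$ in decreasing index order. I would verify this on the two-hole case $\beta_1\beta_2$ versus $\beta_2\beta_1$ against the known value $\operatorname{rot}=\pm1$ for standard Legendrian curves. Once the descending order is confirmed, $D=m-1=\sum_i\varepsilon_i-1$ and $U=0$ give $r(K)=\sum_{i}\varepsilon_i-1$.
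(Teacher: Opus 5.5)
There is a genuine gap in your first step. You exhibit \emph{one} curve in the class $\sum_i\varepsilon_i[\beta_i]$ that meets $b_i$ once for $i\in S$ and misses the other arcs. The lemma, however, concerns an arbitrary essential simple closed curve $K$ in that class. On a planar surface with $d\ge 4$, such a curve is not determined up to isotopy by the set $S$ of holes it encloses.

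For instance, with three inner holes, a curve enclosing holes $1$ and $3$ may pass on the side of hole $2$ through which $b_2$ runs. It then crosses $b_2$ twice with opposite signs. There is no bigon, so this is already minimal position, and the curve is not isotopic to your model. More generally, for $2\le |S|\le d-2$ the curves $D_\gamma^k(K_0)$, with $\gamma$ enclosing exactly $\{s_1,j\}$ and $j\notin S$, are pairwise non-isotopic but all lie in the same homology class, since twists act trivially on $H_1$ of a planar surface.

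For such curves the minimal-position word contains inverse letters, so your claim ``all exponents are $+1$, so $U=0$'' fails. In the convention where your descending order holds, the example above reads $\beta_3\beta_2\beta_1\beta_2^{-1}$, with $D=2$ and $U=1$. The answer is still $1=e-1$, but only through a cancellation your count does not see. This case cannot be set aside: in the proof of Theorem~\ref{thm::bn} the lemma is applied to the twist curves $\alpha_j$ of an arbitrary positive factorization, which need not be in standard position.

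The paper's proof avoids the word combinatorics entirely. It places $K$ on a page of the auxiliary open book $(\Sigma,D^+_{\beta_{d-1}}\circ\cdots\circ D^+_{\beta_1})$ for $(S^3,\xi_{\mathrm{std}})$. There $A=I_{d-1}$ and $r(\beta_i)=0$, so $\mathbf r=0$ and the Durst--Kegel formula reads $r(K)=\operatorname{rot}(K)$. The right-hand side is a Legendrian invariant, equal to $e-1$ for a positively oriented curve enclosing $e$ holes, so no normal form for $K$ is needed.

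To rescue your direct route you would need an invariance statement of comparable strength. For example, you could show that $D-U$ is, up to an overall sign, the turning number of $K$ with respect to a fixed trivialization of $T\Sigma$. Then for embedded positively oriented curves it depends only on $[K]$, and evaluating it on your model curves would suffice.

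Separately, your proposed test of the ordering convention is inconclusive. The words $\beta_1\beta_2$ and $\beta_2\beta_1$ each have exactly one descent, so both give $r=1$. The first case distinguishing increasing from decreasing order is $m=3$, where the two readings give $1$ and $2$. That must be compared with $\operatorname{rot}=2$ for the boundary-parallel $\operatorname{tb}=-3$ unknot.
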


\begin{proof}

Consider the auxiliary open book
$
(\Sigma,D^+_{\beta_{d-1}}\circ\cdots\circ D^+_{\beta_1}),
$
which is obtained by plumbing $d-1$ positive Hopf bands and which supports $(S^3,\xi_{\mathrm{std}})$. In this open book, a positively oriented curve enclosing $e=\sum_{i=1}^{d-1}\varepsilon_i$ inner boundary components is a Legendrian unknot with $\operatorname{tb}=-e$ and $\operatorname{rot}=e-1$, see Proposition \ref{curveStab}. 

For this auxiliary monodromy, the matrix $A$ is the identity matrix $I_{d-1}$. Moreover, each $\beta_i$ encloses exactly one inner boundary component, and hence $r(\beta_i)=0$. Applying the Durst--Kegel rotation formula to $K$, we get
$$
e-1=\operatorname{rot}(K)=r(K)-\langle \mathbf{a},\mathbf{r}\rangle=r(K),
$$
where the last equality holds since $\mathbf{r}=0$. Notice that an integral solution to $\mathbf{a}$ always exists since every knot in $S^3$ is nullhomologous.
This proves the claim.
\end{proof}

\subsection{Tight $L(n,1)$ lens spaces}\label{sec::lensspaces}

Consider a Legendrian unknot $K$ in $(S^3,\xi_{\mathrm{std}})$ with $\operatorname{tb}(K)=-(n-1)$ and $\operatorname{rot}(K)=r$. Topologically, contact $(-1)$-surgery on $K$ is Dehn surgery with
coefficient $\operatorname{tb}(K)-1=-n$ relative to the Seifert framing.
Thus the resulting smooth manifold is $L(n,1)$. For an introduction to
contact surgery we refer to \cite{Geiges08}.

Since contact $(-1)$-surgery preserves Stein fillability, and since $(S^3,\xi_{\mathrm{std}})$ is Stein fillable, the resulting contact
structure is Stein fillable, and in particular tight. Honda's classification \cite{HondaClassification} shows that every tight contact structure on $L(n,1)$ arises in this way. The tight contact structures are distinguished by the possible rotation numbers $\operatorname{rot}(K)\in\{-(n-2),-(n-4),\dots,n-4,n-2\}.$
Thus, for $n>1$, there are $n-1$ tight contact structures on $L(n,1)$.
One calls the contact structures corresponding to $\operatorname{rot}(K)=\pm (n-2)$ universally tight, otherwise we call it virtually overtwisted. This terminology reflects the fact that the lift to the universal cover \(S^{3}\) is tight in the universally tight case and overtwisted in the virtually overtwisted case.

It is known that a contact manifold defined via a contact $(-1)$--surgery on a Legendrian knot up to contactomorphism is independent of the orientation of the Legendrian knot. It follows that the two contact structures $\xi^n_\rho$ and $\xi^n_{-\rho}$ are contactomorphic.

The following filling property will be crucial in the proof of the main theorem.

\begin{thm}[\cite{Plamenevskaya_2010,McD90}]
\label{thm::plam}
If $n\neq 4$, or if $(n,\rho)=(4,0)$, then the tight contact structure $\xi^n_\rho$ on $L(n,1)$ has a unique Stein filling up to symplectic deformation. In the cases $(n,\rho)=(4,\pm 2)$ it has exactly two Stein fillings.
\end{thm}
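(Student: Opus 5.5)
We do not reprove this statement from scratch, since it is quoted from \cite{Plamenevskaya_2010,McD90}. If we were to reconstruct it, the plan is to reduce the classification of Stein fillings to a classification of positive factorizations of a planar monodromy. The universally tight cases can alternatively be handled by McDuff's classification.

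\emph{Step 1: a planar open book.} Realize the Legendrian unknot with $\operatorname{tb}=-(n-1)$ and $\operatorname{rot}=\rho$ as a curve on a page of a planar open book for $(S^3,\xi_{\mathrm{std}})$. Start from the open book in Figure \ref{fig::stdOB}: the page is a sphere with $d$ holes, the monodromy is $D^+_{\beta_{d-1}}\circ\cdots\circ D^+_{\beta_1}$, and a curve enclosing $e$ inner holes has $\operatorname{tb}=-e$ and $\operatorname{rot}=e-1$. Positive and negative stabilizations in the style of Proposition \ref{curveStab} let us adjust $\operatorname{rot}$ while keeping the curve on a planar page. Contact $(-1)$-surgery then adds one positive Dehn twist along this curve. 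So $\xi^n_\rho$ is supported by a planar open book whose monodromy $\varphi$ is written explicitly as a product of boundary-parallel twists and one twist about a curve $\gamma$ enclosing a prescribed set of holes.

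\emph{Step 2: fillings become factorizations.} By Wendl's theorem \cite{Wendl_2010}, every Stein filling of a contact manifold supported by a planar open book is deformation equivalent to the total space of a Lefschetz fibration whose fibre is the page and whose monodromy is a positive factorization of $\varphi$ in the mapping class group of the planar surface. It therefore suffices to classify positive factorizations of $\varphi$ up to Hurwitz moves and global conjugation.

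\emph{Step 3: classify the factorizations.} This is the main obstacle. We would follow Plamenevskaya--Van Horn-Morris. For a planar surface, a factorization determines, for each boundary component and for each pair of boundary components, the number of twists whose curve encloses them. This is read off by capping off all other boundaries and looking at the resulting annulus, where the mapping class group is $\mathbb{Z}$. Comparing these counts with those of the explicit factorization from Step 1 forces, for $n\neq4$, every positive factorization to consist of the same multiset of twist curves up to isotopy. A further Hurwitz-move argument then shows that the factorization is unique, so the Lefschetz fibration, and hence the filling, is unique.

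\emph{The case $n=4$.} The universally tight structures $\rho=\pm2$ are the exception. There the boundary-parallel twists around four holes can be traded via the lantern relation, producing a second factorization with fewer vanishing cycles. This corresponds to the rational homology ball filling, in addition to the disk bundle of Euler number $-4$. For the universally tight structures in general, one can instead invoke McDuff \cite{McD90}: $(L(n,1),\xi^n_{\pm(n-2)})$ is the boundary of a neighborhood of a symplectic sphere of square $-n$. Her classification of fillings, obtained by gluing in a neighborhood of a sphere of square $+n$ to get a rational surface, yields the unique filling for $n\neq4$ and exactly two fillings for $n=4$.
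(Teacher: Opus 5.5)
Relying on \cite{Plamenevskaya_2010,McD90} is exactly what the paper does. It gives no proof of Theorem~\ref{thm::plam}, and your attribution is the right one: Plamenevskaya--Van Horn-Morris for the virtually overtwisted structures, McDuff for the universally tight ones, with the rational ball accounting for $n=4$. As a proof by citation your proposal is therefore fine. The reconstruction you sketch, however, has a genuine gap, and two of its claims are wrong.

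\textbf{Step~1 is wrong for virtually overtwisted structures.} The monodromy is not ``boundary-parallel twists times one twist along $\gamma$'' once $|\rho|<n-2$. Suppose the $S^3$ part were a product of boundary-parallel twists, with $m_B$ twists parallel to the boundary component $B$. Then the relation matrix $AA^T=\operatorname{diag}(m_1,\dots,m_{d-1})+m_O J$, where $J$ is the all-ones matrix, must have determinant $1$. This forces exactly one boundary component to carry no twist and every other to carry exactly one, so the open book is the Hopf plumbing of Figure~\ref{fig::stdOB}. There, as you state yourself, a curve with $\operatorname{tb}=-(n-1)$ has $\operatorname{rot}=\pm(n-2)$, so your Step~1 only reaches the universally tight structures. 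In the actual open book (Figure~\ref{fig::lensspaceOB2}) the monodromy is $D_\alpha\circ\varphi\circ D_\beta$ with $\varphi$ boundary-parallel. The curves $\alpha,\beta$ enclose a common hole together with the $a$, resp.\ $b$, holes coming from stabilizations of the two signs. For $a,b\ge1$ neither curve is boundary-parallel, and they intersect.

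\textbf{Step~3's conclusion is false.} The capping-off counts cannot determine the vanishing cycles up to isotopy. Indeed $D_\alpha D_\beta=D_{D_\alpha(\beta)}D_\alpha$ is another positive factorization, and $D_\alpha(\beta)$ is not isotopic to $\beta$ because $i(\alpha,\beta)\neq0$. What the counts do give, for every virtually overtwisted $\xi^n_\rho$, is the combinatorial type. Any positive factorization consists of the same boundary-parallel twists plus twists along two curves $\alpha',\beta'$ enclosing the same holes as $\alpha,\beta$, whose product in the order they occur equals $D_\alpha D_\beta$. This also covers $(n,\rho)=(4,0)$, which your sketch never addresses. The real content of uniqueness is to show that every such pair gives a Lefschetz fibration equivalent to the standard one. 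For example, one would show $(\alpha',\beta')$ is Hurwitz equivalent to $(\alpha,\beta)$ up to conjugation by a mapping class commuting with the monodromy. ``A further Hurwitz-move argument'' does not supply this, and it is exactly the step the citation is carrying.

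By contrast, for the universally tight structures with $n\neq4$ the counts alone force every vanishing cycle to be boundary-parallel. There the factorization is literally unique, and Wendl's theorem already gives uniqueness of Stein fillings. The lantern triple is precisely the loophole at $n=4$.
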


We shall combine this with Wendl's theorem on planar open books.

\begin{thm}[Wendl, \cite{Wendl_2010}]
\label{thm::wendl}
If $(M,\xi)$ is a planar contact manifold, then it is strongly (and thus Stein) fillable if and only if every supporting planar open book has monodromy isotopic to a product of positive Dehn twists.
\end{thm}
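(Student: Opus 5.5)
The plan is to prove the two implications separately. The easy direction is the one where some supporting planar open book has monodromy $\varphi=D^+_{\alpha_k}\circ\cdots\circ D^+_{\alpha_1}$. Then one builds a Lefschetz fibration over $D^2$ with fiber the page $\Sigma$ and vanishing cycles $\alpha_1,\dots,\alpha_k$. Since every vanishing cycle is homologically essential or at least nonseparating in the relevant sense, this is an allowable fibration (in the planar case one stabilizes first if necessary). By the Loi--Piergallini / Akbulut--Ozbagci correspondence, it carries a Stein structure whose boundary is the contact manifold supported by $(\Sigma,\varphi)$. So the contact manifold is Stein, hence strongly, fillable. The word ``every'' in that direction is not needed for fillability; it is the converse that needs real work.

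For the converse, fix a strong filling $(W,\omega)$ of $(M,\xi)$ and an arbitrary supporting planar open book $(\Sigma,\varphi)$. Then:

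\begin{enumerate}
\item Choose a Giroux form $\alpha$ adapted to the open book. The binding components are nondegenerate elliptic Reeb orbits, and the pages, after a suitable choice of $\mathbb{R}$-invariant almost complex structure $J$ on the symplectization, lift to an $S^1$-family of embedded finite-energy $J$-holomorphic punctured spheres. These spheres are asymptotic to the binding orbits and have Fredholm index $2$ (a holomorphic open book in Wendl's sense).
\item Attach the cylindrical end $[0,\infty)\times M$ to $W$. Extend $J$ to a compatible almost complex structure on the completion $\widehat W$ that agrees with the above near infinity.
\item Consider the connected moduli space $\mathcal{M}$ of curves in $\widehat W$ containing the pages at the end. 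Genus zero together with index $2$ gives automatic transversality, so $\mathcal{M}$ is a smooth $2$-manifold. Siefring's intersection theory (asymptotic winding bounds at elliptic orbits) shows that distinct curves in $\mathcal{M}$ are disjoint embedded spheres and foliate an open subset.
\item Analyze the SFT compactification $\overline{\mathcal{M}}$. Using positivity of intersections and index/energy bookkeeping, show that the only degenerations are nodal curves with exactly two smooth components of index $0$ meeting transversally at one point. Exactness near infinity and the absence of closed spheres of negative self-intersection after blowing down are used here; one first blows down exceptional spheres to make $W$ minimal.
\item Conclude that $\overline{\mathcal{M}}$ is a disk and that evaluation gives a Lefschetz fibration $\widehat W\to D^2$ whose regular fiber is the page $\Sigma$. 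The singular fibers are the nodal curves, whose vanishing cycles have positive Dehn twists as local monodromy by complex orientation. Its monodromy around $\partial D^2$ equals $\varphi$, giving the positive factorization.
\end{enumerate}

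The main obstacle is step 4, the compactness analysis. One must exclude breaking into multi-level buildings, multiply covered components and bubbling of spheres elsewhere, and ensure the foliation fills all of $\widehat W$ rather than stopping short. I would attack this first via an ``intersection number with the fibers is zero'' argument: any limiting component must have zero intersection with generic fibers, and hence is itself a fiber component. This is combined with the index formula for genus-zero curves with elliptic asymptotics to force each nodal degeneration into exactly two pieces.

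Finally, every strong filling of a planar manifold deforms to one supported by such a fibration. Since all fillings here are therefore Stein up to deformation, ``strongly'' and ``Stein'' fillability agree, which justifies the parenthetical in the statement.
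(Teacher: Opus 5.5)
The paper gives no proof of this theorem; it is quoted from Wendl and used as a black box. Your outline is a faithful summary of Wendl's own argument, so it agrees with the source the paper relies on: a positive factorization gives an allowable Lefschetz fibration and hence a Stein filling, and conversely a planar holomorphic open book in the symplectization is pushed into the completed strong filling, whose compactified moduli space of index-$2$ genus-zero curves gives a Lefschetz fibration over $D^2$ with the pages as fibers and monodromy $\varphi$.

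Two justifications need repair.

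\emph{Allowability.} On a planar page every simple closed curve separates, so appealing to nonseparating curves or to stabilization does not apply. Instead, delete twists along disk-bounding curves, which are isotopic to the identity; the remaining vanishing cycles are then homologically essential.

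\emph{Closed bubbles.} Minimal fillings can contain negative spheres: the disk-bundle filling of the universally tight $\xi^n_{\pm(n-2)}$ contains a symplectic $(-n)$-sphere. So closed bubbles cannot be excluded by an absence of negative spheres. They are excluded by genericity of $J$, under which simple closed spheres have index $\geq 0$ and index bookkeeping in a nodal limit leaves only exceptional spheres, together with minimality of the filling.
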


These two results imply the following.

\begin{lemma}
\label{lemma:factorization}
Let $(\Sigma,\varphi)$ be a planar open book with $d$ boundary components supporting the tight contact structure $\xi^n_\rho$ on $L(n,1)$. Then $\varphi$ admits a positive factorization consisting of exactly $d$ right-handed Dehn twists if $n\neq4$ or $(n,\rho)=(4,0)$. In the case of $(n,\rho)=(4,\pm 2)$ there is a positive factorization consisting of exactly $d-1$ or $d$ right-handed Dehn twists.
\end{lemma}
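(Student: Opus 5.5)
The argument has two parts: first obtain \emph{some} positive factorization from fillability, then count its Dehn twists using the Euler characteristic of the corresponding Lefschetz fibration.

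\textbf{Existence.} Since $\xi^n_\rho$ is Stein fillable by the discussion in Section \ref{sec::lensspaces}, Theorem \ref{thm::wendl} says that the monodromy $\varphi$ of any supporting planar open book is isotopic to a product $D^+_{\alpha_k}\circ\cdots\circ D^+_{\alpha_1}$ of right-handed Dehn twists. We may assume every $\alpha_j$ is homotopically nontrivial, since twists along nullhomotopic curves are trivial and can be deleted. Such a factorization gives a Lefschetz fibration $W\to D^2$ with fiber $\Sigma$ and $k$ singular fibers. The total space $W$ is a Stein filling of $(M_{(\Sigma,\varphi)},\xi)\cong (L(n,1),\xi^n_\rho)$.

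\textbf{Counting twists.} We have
\[
\chi(W)=\chi(D^2)\chi(\Sigma)+k=(2-d)+k.
\]
Theorem \ref{thm::plam} then identifies $W$, up to diffeomorphism and hence up to Euler characteristic, with the known filling.

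When $n\neq 4$ or $(n,\rho)=(4,0)$, the unique Stein filling is the one from contact $(-1)$-surgery on the Legendrian unknot. This is $B^4$ with one $2$-handle attached, i.e.\ the disk bundle of Euler number $-n$ over $S^2$. It has $\chi=2$, so $2-d+k=2$, which gives $k=d$.

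When $(n,\rho)=(4,\pm2)$, the two Stein fillings are this disk bundle, with $\chi=2$, and the rational homology ball $B_{2}$, the complement of a conic in $\mathbb{CP}^2$, with $\chi=1$. The first gives $k=d$ and the second gives $k=d-1$.

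\textbf{Main obstacle.} The delicate point is the Euler characteristic of the second filling for $n=4$. I would take this from McDuff's classification \cite{McD90}: the second filling is a rational ball, so its $H_2$ vanishes and $\chi=1$, as required. I would also state explicitly that the Lefschetz fibration total space is genuinely a Stein filling of the same contact manifold, by the standard correspondence between positive factorizations and Stein fillings (Loi--Piergallini, Akbulut--Ozbagci). This is what lets Theorem \ref{thm::plam} apply, and Stein deformation preserves diffeomorphism type and hence $\chi$.
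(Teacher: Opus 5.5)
Your proposal is correct and follows essentially the same argument as the paper: Wendl's theorem gives a positive factorization, the associated Stein filling has $\chi=2-d+k$, and uniqueness of the filling (Theorem \ref{thm::plam}) forces $\chi=2$, hence $k=d$. The one variation is the case $(n,\rho)=(4,\pm2)$, where you take $\chi=1$ for the second filling from McDuff's identification of it as a rational homology ball, whereas the paper obtains the three-twist factorization concretely by applying the lantern relation to the open book of Figure \ref{fig::OBl41}; your remark about first discarding twists along nullhomotopic curves is a small point the paper leaves implicit.
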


\begin{proof}
As explained above, every tight contact structure on $L(n,1)$ is Stein fillable. By Theorem \ref{thm::wendl}, the monodromy $\varphi$ admits a factorization into right-handed Dehn twists. Suppose this positive factorization contains $k$ Dehn twists.

The open book $(\Sigma,\operatorname{id})$ gives the contact manifold $\#_{d-1}S^1\times S^2$, whose standard Stein filling is $\natural_{d-1}S^1\times D^3.$ This filling is built from one Stein $0$-handle and $d-1$ Stein $1$-handles. Each right-handed Dehn twist in a positive factorization of $\varphi$ corresponds to attaching a Stein $2$-handle. Therefore the Stein filling associated to this factorization has Euler characteristic $\chi(X)=1-(d-1)+k=2-d+k.$

On the other hand, the tight contact structure on $L(n,1)$ under consideration is obtained by contact $(-1)$-surgery on a Legendrian unknot in $(S^3,\xi_{\mathrm{std}})$. The corresponding Stein filling is obtained from $D^4$ by attaching one Stein $2$-handle, and therefore has Euler characteristic $\chi=1+1=2.$ Theorem \ref{thm::plam} implies that the two fillings have the same Euler characteristic in the case of $n\neq 4$ or $(n,\rho)=(4,0)$. In particular, $2=2-d+k$, which proves our claim in this case.

In the case of $(n,\rho)=(4,\pm 2)$ there are two Stein fillings. 
We now compute their Euler characteristics. In Section \ref{sec::upper} we construct supporting abstract open books for $(L(4,1),\xi^4_{\pm 2})$ as in Figure \ref{fig::OBl41}. The monodromy of the abstract open book is the product of four right-handed Dehn twists, which yields a Stein filling with $\chi=2$; one may factorize it into $3$ right-handed Dehn twist curves using the lantern relation, which yields the second filling with $\chi=1.$

\begin{figure}[h]
    \centering
    \includegraphics[width=0.95\textwidth]{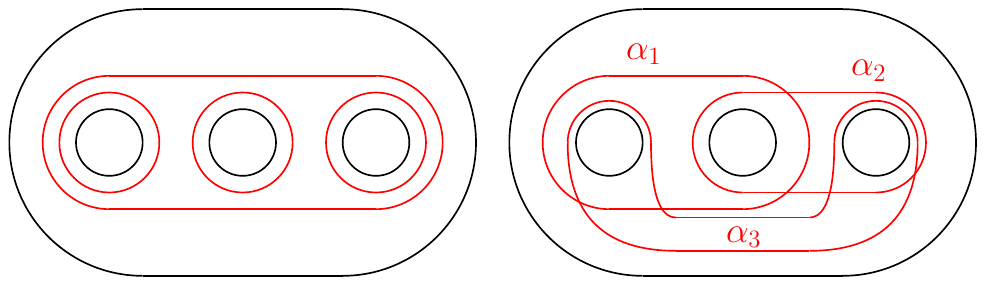}
    \caption{    Two positive factorizations of the same monodromy supporting the universally tight contact structures $\xi^4_{\pm 2}$ on $L(4,1)$. The factorizations are related by the lantern relation. The right diagram has monodromy $D_{\alpha_3}^+\circ D_{\alpha_2}^+ \circ D_{\alpha_1}^+$.}
    \label{fig::OBl41}
\end{figure}

\end{proof}

Finally, the above Legendrian surgery description gives the $d_3$-invariant.
Indeed, let $X$ be the Stein filling of $(L(n,1),\xi^n_\rho)$ obtained from $D^4$ by attaching one
Stein 2-handle along a Legendrian unknot $K$ with
$\operatorname{tb}(K)=-(n-1)$ and $\operatorname{rot}(K)=\rho$. The topological framing is therefore $\operatorname{tb}(K)-1=-n$, and hence $$Q_X=[-n], \qquad \chi(X)=2, \qquad \sigma(X)=-1.$$ Furthermore, Gompf's formula gives \[
\langle c_1(X,J),[S]\rangle=\rho,
\]
where $[S]$ is the generator of $H_2(X)$ represented by the core of the
2-handle capped off by a Seifert surface for $K$. See \cite{gompf1998handlebodyconstructionsteinsurfaces} for more details. It follows then that $$c_1(X,J)^2=\rho\cdot Q_X^{-1}\cdot \rho=-\frac{\rho^2}{n}$$ and $$4d_3(\xi)
=
c_1(X,J)^2-2\chi(X)-3\sigma(X)
=
-\frac{\rho^2}{n}-4+3
=
-\frac{\rho^2}{n}-1.$$

\section{Standard open books and the upper bound}\label{sec::upper}

In this section we prove the following Lemma.
\begin{lemma}\label{lemma::upper}
    The tight contact structure $\xi^n_\rho$ on $L(n,1)$ has binding number $$\operatorname{bn}(\xi^n_\rho)\leq n.$$
\end{lemma}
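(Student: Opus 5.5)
Since $\operatorname{bn}(\xi^n_\rho)$ is computed among open books of genus $\operatorname{sg}(\xi^n_\rho)$, it suffices to show that $\xi^n_\rho$ is planar and is supported by a planar open book with exactly $n$ boundary components. Once such a planar open book is exhibited, the support genus is $0$, and $\operatorname{bn}(\xi^n_\rho)$ is at most its number of binding components.

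The construction starts from the standard planar open book for $(S^3,\xi_{\mathrm{std}})$ in Figure \ref{fig::stdOB}. We take $\Sigma$ to be a planar surface with $d=n$ boundary components, that is, $n-1$ inner boundary components. The monodromy is $D^+_{\beta_{n-1}}\circ\cdots\circ D^+_{\beta_1}$. This is a plumbing of $n-1$ positive Hopf bands, so it supports $\xi_{\mathrm{std}}$.

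By the fact recorded in the proof of Lemma \ref{lemma::r} (Proposition \ref{curveStab}), a positively oriented simple closed curve $K$ enclosing all $e=n-1$ inner boundary components is Legendrian realizable on a page. It is a Legendrian unknot with $\operatorname{tb}=-(n-1)$ and $\operatorname{rot}=n-2$. Other rotation numbers $\rho\in\{-(n-2),-(n-4),\dots,n-2\}$ are obtained by choosing where the curve sits relative to the arcs, that is, which stabilizations are positive or negative. Concretely, I would realize the unknot as a curve enclosing all $n-1$ holes, but on a page built from a sequence of Hopf-band stabilizations of a disk open book. Each stabilization is chosen positive or negative (the Hopf band is attached on one side or the other of the arc the curve crosses), so that exactly $(n-2+\rho)/2$ positive and $(n-2-\rho)/2$ negative stabilizations occur. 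This is the standard Etnyre/Ozbagci picture: the page and monodromy stay the same up to diffeomorphism, but the curve $K$ changes. Equivalently, I would cite that every stabilization of the Legendrian unknot can be put on a page of a planar open book with $n$ binding components, and check $\operatorname{rot}$ with the Durst--Kegel formula together with Lemma \ref{lemma::r}.

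Contact $(-1)$-surgery on a Legendrian knot lying on a page corresponds to composing the monodromy with a positive Dehn twist along that curve. Hence $(\Sigma, D^+_K\circ D^+_{\beta_{n-1}}\circ\cdots\circ D^+_{\beta_1})$ supports $\xi^n_\rho$. It is planar with $n$ binding components and $n$ Dehn twists, which is consistent with Lemma \ref{lemma:factorization}. This gives $\operatorname{bn}(\xi^n_\rho)\le n$.

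The main obstacle is realizing \emph{every} rotation number $\rho$, not just $\pm(n-2)$, by a curve on the same $n$-holed planar page. The naive curve enclosing all holes in Figure \ref{fig::stdOB} only gives $\operatorname{rot}=n-2$. My first attempt would be to take $K$ as a curve enclosing all inner holes but winding through the arcs $b_i$ so that the word $w$ has prescribed ascents and descents. I would then compute $\operatorname{tb}$ and $\operatorname{rot}$ via the Durst--Kegel formulas to confirm $\operatorname{tb}=-(n-1)$ and $\operatorname{rot}=\rho$. If that fails, I would fall back on changing the monodromy by conjugation, reordering and choosing different disjoint curves $\beta_i'$ in a chain-like configuration; stabilizing on the left versus the right then flips the sign of the rotation contribution.
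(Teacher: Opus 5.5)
Your reduction to exhibiting a planar open book with $n$ binding components is fine, and so is the final step via Proposition~\ref{surgerytwist}. The gap is in the construction of the surgery curve, exactly in the case you flag: $|\rho|<n-2$, which occurs once $n\ge 4$.

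\textbf{Why the fixed page fails.} On a planar page, a simple closed curve enclosing all $n-1$ inner boundary components is parallel to the outer boundary component. So there is only one such $K$ up to isotopy. Rerouting it through the arcs $b_i$ changes nothing, because the Durst--Kegel word is read only after minimizing intersections with the arcs. Consequently the open book $(\Sigma,D^+_K\circ D^+_{\beta_{n-1}}\circ\cdots\circ D^+_{\beta_1})$ you write down, which has one positive twist about each boundary component, is independent of $\rho$. It therefore supports a single contact structure. But $d_3(\xi^n_\rho)=-\frac14\bigl(1+\frac{\rho^2}{n}\bigr)$ separates different values of $|\rho|$, and by your own computation that structure is $\xi^n_{\pm(n-2)}$.

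In fact no curve on that page works. For the monodromy $D^+_{\beta_{n-1}}\circ\cdots\circ D^+_{\beta_1}$ one has $A=I$. With $c_i=K\cdot b_i$, one gets $\mathbf l=(-c,-c)$ and $\mathbf a=(-c,c)$, so $\langle\mathbf a,\mathbf l\rangle=0$. Hence $\operatorname{tb}(K)=-e$, where $e$ is the number of holes enclosed by $K$. The only curve with $\operatorname{tb}=-(n-1)$ is therefore the boundary-parallel one, and its rotation number is $\pm(n-2)$.

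This also refutes your claim that choosing positive versus negative stabilizations leaves page and monodromy unchanged up to diffeomorphism. If the resulting $S^3$ open book were conjugate to the standard one, the image of $K$ would be a curve on the standard page with $\operatorname{tb}=-(n-1)$ and $|\operatorname{rot}|<n-2$, which is impossible. Your fallback does not help either: conjugation yields the same abstract open book, and the boundary twists commute, so reordering them changes nothing.

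\textbf{The fix.} You must let the monodromy depend on $\rho$. This is what your passing sentence about putting every stabilization of the unknot on a planar page actually does, and it is the paper's proof. Start from the annular open book $(\mathrm{A},D_c^+)$ of $(S^3,\xi_{\mathrm{std}})$; its core realizes the unknot with $\operatorname{tb}=-1$ and $\operatorname{rot}=0$. Apply Proposition~\ref{curveStab} $n-2$ times, taking $a=\frac{n-2+\rho}{2}$ stabilizations of one sign and $b=\frac{n-2-\rho}{2}$ of the other. Each is a planarity-preserving positive stabilization of the open book that adds one boundary component. The result is a planar open book of $(S^3,\xi_{\mathrm{std}})$ with $n$ boundary components, depending on $(a,b)$, carrying a curve $K$ with $\operatorname{tb}=-(n-1)$ and $\operatorname{rot}=a-b=\rho$. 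Proposition~\ref{surgerytwist} then gives a planar open book for $\xi^n_\rho$ with $n$ binding components.
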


We first recall two standard facts about open books which will be used
throughout the section; see \cite[Lemma~3.3]{EtnyrePlanar} for the stabilization
statement and \cite[Theorem~5.7]{EtnyreOpenBooks} for the surgery statement.

\begin{prop}\label{curveStab}
Let $(\Sigma,\varphi)$ be an abstract open book and let
$\gamma\subset \Sigma$ be a simple closed curve. Then there exists a
positive stabilization $(\Sigma',\varphi')$ of $(\Sigma,\varphi)$ and a
simple closed curve $\gamma'\subset \Sigma'$ such that the Legendrian
realization $K_{\gamma'}$ in
$(M_{(\Sigma',\varphi')},\xi_{(\Sigma',\varphi')})$ is Legendrian isotopic to
either the positive or the negative Legendrian stabilization of
$K_\gamma$ in $(M_{(\Sigma,\varphi)},\xi_{(\Sigma,\varphi)})$, after identifying $(M_{(\Sigma',\varphi')},\xi_{(\Sigma',\varphi')})$ with $(M_{(\Sigma,\varphi)},\xi_{(\Sigma,\varphi)})$ by the contactomorphism induced by positive stabilization. See Figure \ref{fig::stab}.
\end{prop}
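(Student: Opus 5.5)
The statement is Proposition \ref{curveStab}, a standard fact that the paper cites to \cite[Lemma~3.3]{EtnyrePlanar}. I would prove it by the usual stabilization-along-an-arc argument.

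Begin with the Legendrian realization. Fix a simple closed curve $\gamma\subset\Sigma$ that is nonseparating, or more generally homologically essential in the sense needed for the Legendrian realization principle. By \cite{EtnyreOpenBooks}, after isotoping the contact structure supported by $(\Sigma,\varphi)$, the curve $\gamma$ on the page $\Sigma\times\{1/2\}$ is Legendrian. Its contact framing equals the framing induced by the page.

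Next, choose the stabilization. Pick a point $p\in\gamma$ and a properly embedded arc $a\subset\Sigma$ that starts at a point of $\partial\Sigma$, meets $\gamma$ only at $p$, and ends on $\partial\Sigma$ again. Equivalently, take a short arc from $\partial\Sigma$ to $\gamma$, which may be chosen on either side of $\gamma$. Attach a $1$-handle $h$ to $\Sigma$ along the endpoints of a small arc near $p$, and let $c$ be the core curve formed by the cocore arc of $h$ together with a pushoff arc in $\Sigma$. Set $\Sigma'=\Sigma\cup h$ and $\varphi'=\varphi\circ D^+_c$. This is a positive stabilization, so $(\Sigma',\varphi')$ supports a contactomorphic contact manifold. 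Define $\gamma'$ to be the band sum of $\gamma$ with $c$, that is, $\gamma$ rerouted once over the handle $h$.

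The core of the proof is to identify $K_{\gamma'}$. Positive stabilization amounts to taking a connected sum with $(S^3,\xi_{\mathrm{std}})$ carrying its Hopf-band open book, and the curve $c$ is the Legendrian realization of a $\operatorname{tb}=-1$ unknot $U$ in that $S^3$ summand. On the page, $\gamma'$ is obtained from $\gamma$ by a band sum with $c$, so $K_{\gamma'}$ is the Legendrian connected sum $K_\gamma\# U$, up to Legendrian isotopy. The Legendrian connected sum with the $\operatorname{tb}=-1$, $\operatorname{rot}=\pm1$ unknot is exactly a positive or negative stabilization, where the sign is determined by the orientation of $\gamma$ relative to $c$. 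This gives the claimed identification.

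As a framing check, the page framing of $\gamma'$ differs from that of $\gamma$ by $-1$ once the twist $D_c^+$ is taken into account, which gives $\operatorname{tb}(K_{\gamma'})=\operatorname{tb}(K_\gamma)-1$. The rotation number changes by $\pm1$ according to the side of $\gamma$ on which the handle is attached.

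The main obstacle is justifying the identification of the band sum on the page with the Legendrian connected sum in the stabilized manifold, and in particular pinning down which sign of stabilization each side gives. I would handle this in a local model: work in a neighborhood of the arc, which is a Darboux ball containing the Hopf band, and draw the front projection explicitly. This is the standard picture in \cite{EtnyrePlanar}, and I would cite that picture rather than recompute it.
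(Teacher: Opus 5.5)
Your framing check is correct, but the step you call the core of the proof is false, and it contradicts that check. By the Bennequin bound $\operatorname{tb}+|\operatorname{rot}|\le -1$, a Legendrian unknot with $\operatorname{tb}=-1$ must have $\operatorname{rot}=0$, so there is no ``$\operatorname{tb}=-1$, $\operatorname{rot}=\pm1$'' unknot. Moreover, since $\operatorname{tb}(L_1\#L_2)=\operatorname{tb}(L_1)+\operatorname{tb}(L_2)+1$ and $\operatorname{rot}(L_1\#L_2)=\operatorname{rot}(L_1)+\operatorname{rot}(L_2)$, Legendrian connected sum with the $\operatorname{tb}=-1$ unknot $U$ changes nothing (indeed $L\#U\simeq L$). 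So if $K_{\gamma'}$ were $K_\gamma\#U$, it would be Legendrian isotopic to $K_\gamma$, not to a stabilization.

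The mistake is identifying a band sum on the page with a Legendrian connected sum. The contact framing of a Legendrian-realized curve is its page framing. For your band sum of $\gamma$ with $c$, which lie in different connected summands, page framings simply add: $\operatorname{tb}(K_{\gamma'})=\operatorname{tb}(K_\gamma)+\operatorname{tb}(K_c)=\operatorname{tb}(K_\gamma)-1$, with no $+1$. So the page band sum is a Legendrian connected sum followed by one stabilization, i.e.\ $K_\gamma\#S_\pm(U)$, where $S_\pm(U)$ is the $\operatorname{tb}=-2$, $\operatorname{rot}=\pm1$ unknot. Establishing exactly this is the content of the proposition, so it cannot simply be asserted. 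Matching $\operatorname{tb}$ and $\operatorname{rot}$ alone also does not determine a Legendrian isotopy class in a general $(M,\xi)$.

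To close the gap you need a genuine model argument. The paper gives no independent proof; it imports this from \cite[Lemma~3.3]{EtnyrePlanar} together with Figure~\ref{fig::stab}. One route:
in the annular open book $(\mathrm{A},D_c^+)$ of $(S^3,\xi_{\mathrm{std}})$, one stabilization turns the core ($\operatorname{tb}=-1$, $\operatorname{rot}=0$) into the curve enclosing both inner holes of the pair of pants. A direct Durst--Kegel computation with $A=I_2$ shows this curve has $\operatorname{tb}=-2$, $\operatorname{rot}=\pm1$. By the Eliashberg--Fraser classification of Legendrian unknots it is therefore $S_\pm$ of the core. You then show that the general modification is supported in a standard neighborhood of a point of $K_\gamma$ near the binding, where it agrees with this model.

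Your setup also needs repair first. The stabilizing arc must be disjoint from $\gamma$: take a boundary-parallel arc $a$ with both endpoints on one boundary component $C$, running parallel to a subarc of $\gamma$ on the side facing $C$. Then set $c=a\cup\operatorname{core}(h)$ (core, not cocore); the side chosen determines the sign. Your first description, an arc meeting $\gamma$ once, is not equivalent to your second (a short arc from $\gamma$ to $\partial\Sigma$). If the arc meets $\gamma$ once, then $c$ meets $\gamma$ once and the band sum is undefined. If its endpoints lie on different boundary components, the page genus increases (in the annulus you get a punctured torus), which destroys the planarity used in Section~\ref{sec::upper}.
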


\begin{figure}[h]
    \centering
    \includegraphics[width=1\textwidth]{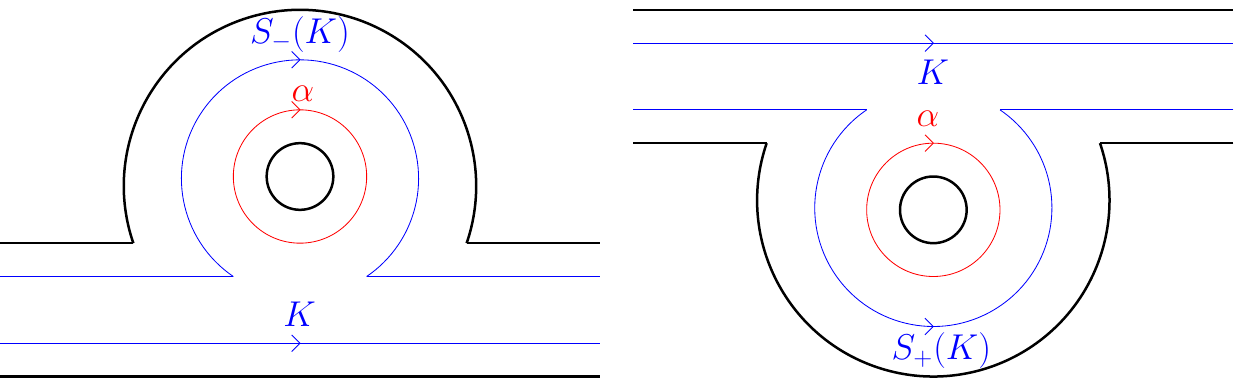}
    \caption{Negative and positive stabilization of a curve $K$ in an open book, respectively, drawn abstractly. One performs a right-handed Dehn twist along $\alpha.$}
    \label{fig::stab}
\end{figure}

\begin{prop}\label{surgerytwist}
Let $(\Sigma,\varphi)$ be an abstract open book supporting
$(M_{(\Sigma,\varphi)},\xi_{(\Sigma,\varphi)})$, and let
$\gamma\subset\Sigma$ be a simple closed curve.  Let $K_\gamma$ denote the
Legendrian realization of $\gamma$ on a page in $(M_{(\Sigma,\varphi)},\xi_{(\Sigma,\varphi)})$.  Then the contact $3$--manifold obtained from contact $(-1)$--surgery on $K_\gamma$ is supported by the abstract open book $(\Sigma,\varphi\circ D^+_\gamma)$.

\end{prop}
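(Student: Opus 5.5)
The plan is to split the statement into a topological part and a contact part: first identify the surgered manifold with $M_{(\Sigma,\varphi\circ D^+_\gamma)}$, then check that the surgered contact structure is supported by the new open book. Giroux's uniqueness theorem finishes the argument, since any contact structure supported by $(\Sigma,\varphi\circ D^+_\gamma)$ is isotopic to the one it determines.

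\textbf{Topological step.} By the Legendrian realization principle, after isotoping $\xi_{(\Sigma,\varphi)}$ through supported contact structures we may assume $K_\gamma$ lies on the page $\Sigma\times\{1/2\}$. Then the contact framing of $K_\gamma$ coincides with the framing induced by the page: $\xi$ is arbitrarily close to the tangent planes of the pages near $K_\gamma$, away from the binding. Hence contact $(-1)$-surgery is smooth surgery with coefficient $-1$ relative to the page framing. Take a neighbourhood $N=[1/2-\epsilon,1/2+\epsilon]\times A$ of $K_\gamma$, where $A\subset\Sigma$ is an annulus with core $\gamma$. The standard fact to invoke is that page-framing $(-1)$-surgery on $\{1/2\}\times\gamma$ equals cutting $M$ along $\{1/2\}\times\Sigma$ and regluing by the right-handed Dehn twist $D_\gamma^+$ supported in $A$. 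One proves this by writing the solid torus glued in as $N$ with the product structure twisted once along the $A$-direction as one crosses the middle level. The resulting mapping torus has monodromy $\varphi\circ D_\gamma^+$, with the order dictated by the convention $(1,x)\sim(0,\varphi(x))$. The binding and its neighbourhood are untouched.

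\textbf{Sign and order check.} I expect this to be the fiddliest point. I would verify the sign by a direct homological computation in $N$: the meridian of the new solid torus must be $\mu+\lambda_{\mathrm{page}}^{-1}\cdot(\ldots)$, corresponding to slope $-1$ rather than $+1$. As a sanity check, I would use the known example in which $(-1)$-surgery on the core of an annulus open book for $(S^1\times S^2,\xi_{\mathrm{std}})$ yields the positive Hopf band supporting $(S^3,\xi_{\mathrm{std}})$.

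\textbf{Contact step.} This is the main obstacle. Choose a Thurston--Winkelnkemper contact form $\alpha=dt+\beta_t$ on the mapping torus, with $d\beta_t$ an area form on $\Sigma$, so that on $N$ it takes the form $dt+s\,d\theta$ after arranging $\beta_t|_A=s\,d\theta$ with $(s,\theta)\in A$. In these coordinates $K_\gamma=\{t=1/2,s=s_0\}$ is Legendrian, and Legendrian surgery is modelled by removing $N$ and regluing via the shear $(t,s,\theta)\mapsto(t,s,\theta+f(s))$ across $t=1/2$, with $f$ monotone from $0$ to $2\pi$. This shear is a right-handed Dehn twist. The form is modified as $dt+s\,d\theta+g(t)\,s f'(s)\,ds$, or one interpolates to a form $dt+\beta'_t$ with $\beta'_t$ a path of primitives of area forms. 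Then $d\beta'_t$ remains positive on every page, and $\alpha$ is unchanged, hence positive, near the binding. I would check that this reglued form agrees with Weinstein's Legendrian surgery model, up to contactomorphism, by matching the standard neighbourhood of $K_\gamma$.

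Once the reglued form is shown to be a contact form adapted to $(\Sigma,\varphi\circ D^+_\gamma)$, the surgered contact manifold is supported by this open book, which proves the proposition. If matching with Weinstein's model directly proves awkward, I would instead cite the equivalent handle picture. Here $(\Sigma,\varphi)$ bounds a Lefschetz-type filling, and adding the vanishing cycle $\gamma$ attaches a Stein $2$-handle along $K_\gamma$ with framing $\operatorname{tb}-1$, which is exactly Legendrian surgery.
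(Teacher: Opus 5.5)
The paper does not prove this proposition itself; it quotes it from \cite[Theorem~5.7]{EtnyreOpenBooks}. Your outline follows the standard argument behind that citation, and the topological half is fine. For $K_\gamma$ Legendrian on a page, the page framing equals the contact framing, and page-framed $(-1)$-surgery is cutting along the page and regluing by $D^+_\gamma$.

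The gap is the contact half. You flag it as the main obstacle but only announce it. Your reglued form $dt+s\,d\theta+g(t)sf'(s)\,ds$ shows that the surgered manifold carries \emph{some} contact structure adapted to $(\Sigma,\varphi\circ D^+_\gamma)$ that equals $\xi$ outside the neighbourhood $N$ of $K_\gamma$. That does not yet identify it with contact $(-1)$-surgery. The same recipe with the opposite shear, on a thin enough annulus, gives a structure adapted to $(\Sigma,\varphi\circ (D^+_\gamma)^{-1})$. Nothing in your argument excludes an overtwisted solid torus having been glued in.

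The missing step has two parts:
(i) the reglued structure on the new solid torus $N'$ is tight (it already agrees with $\xi$ near $\partial N'$ by construction);
(ii) a tight contact structure on a solid torus with this boundary germ, whose two dividing curves meet the meridian once, is unique up to isotopy rel boundary (Kanda, Honda). This uniqueness is exactly why contact $(\pm1)$-surgery involves no choices.

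Since the smooth gluing is the $(-1)$ one, (i) and (ii) give the identification. For (i), observe that $N'$ is literally the local model produced by the same regluing inside the annulus open book $(A,\operatorname{id})$ with $\beta=s\,d\theta$. Granting your sign check, that regluing yields the positive Hopf band, hence the tight $(S^3,\xi_{\mathrm{std}})$ by Giroux uniqueness. ``Matching with Weinstein's model'' names this goal but is not an argument for it.

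The fallback does not close the gap. First, $(\Sigma,\varphi)$ bounds a Lefschetz filling only if $\varphi$ is a product of positive Dehn twists, which the proposition does not assume. Second, the fact that a PALF's boundary is supported by its open book is normally proved by adding vanishing cycles one at a time using precisely this proposition. So it is a citation of the result rather than a proof, which is in effect what the paper does anyway.

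Two smaller slips:
\begin{itemize}
\item Along $\{t=1/2,\ s=s_0\}$ one has $\alpha(\partial_\theta)=s_0$, so the curve is Legendrian only for $s_0=0$.
\item Your sanity check cannot detect the sign, since page-framed $(\pm1)$-surgery on the core of $(A,\operatorname{id})$ gives $S^3$ either way. Surgery on the core of $(A,D_c^+)$, a Legendrian unknot with $\operatorname{tb}=-1$, does distinguish them: the $-1$ case gives $L(2,1)=M_{(A,(D_c^+)^2)}$, while the $+1$ case gives $S^1\times S^2=M_{(A,\operatorname{id})}$.
\end{itemize}
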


The following is the standard construction of planar open books supporting tight contact structures on lens spaces; see, for example, \cite{Sch07}.

In order to construct supporting planar open books, we begin with the annular open book $(\mathrm{A},D_c^+)$ supporting $(S^3,\xi_{\mathrm{std}})$, where $\mathrm{A}$ is the annulus and $c$ is the core curve of the annulus. The Legendrian realization of the curve $c$ is a Legendrian unknot with $\operatorname{tb}(K)=-1$ and $\operatorname{rot}(K)=0$. Iterating Proposition \ref{curveStab}, we can perform $a+b$ positive stabilizations, all preserving planarity, so that the resulting open book admits a Legendrian curve $K$ in the page $\Sigma\times\{0\}$ with $\operatorname{tb}(K)=-1-(a+b)$ and $\operatorname{rot}(K)=a-b$. See Figure \ref{fig::lensspaceOB}. By Proposition \ref{surgerytwist}, replacing the monodromy by $\varphi\circ D^+_{K}$ gives an open book supporting the contact manifold obtained from $(S^3,\xi_{\mathrm{std}})$ by contact $(-1)$-surgery along the Legendrian knot $K$. Hence the resulting planar open book has $2+a+b$ boundary components and supports the tight contact structure  $\xi^{2+a+b}_{a-b}$ on $L(2+a+b,1)$. See Figure \ref{fig::lensspaceOB2}. Therefore every tight contact structure $\xi^n_\rho$ on $L(n,1)$ has $\text{sg}(\xi)=0$ and binding number $\operatorname{bn}(\xi^n_\rho)\leq n$. This proves Lemma \ref{lemma::upper}.

\begin{figure}[h]
    \centering
    \includegraphics[width=1\textwidth]{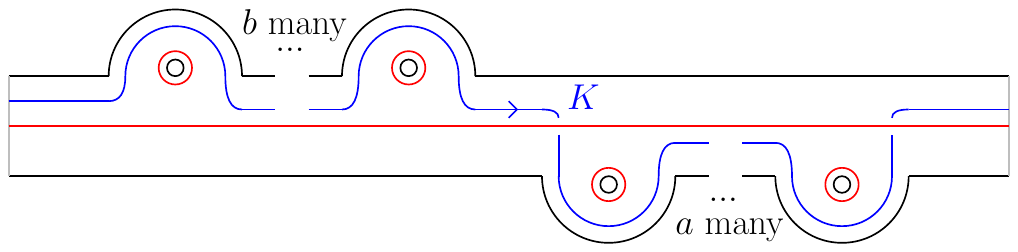}
    \caption{A planar open book for \((S^3,\xi_{\mathrm{std}})\) obtained
from the annular open book by \(a+b\) positive stabilizations. The red
curves are the right-handed Dehn twist curves. The blue curve \(K\)
satisfies \(tb(K)=-1-(a+b)\) and \(\operatorname{rot}(K)=a-b\).}
    \label{fig::lensspaceOB}
\end{figure}

\begin{figure}[h]
    \centering
    \includegraphics[width=0.8\textwidth]{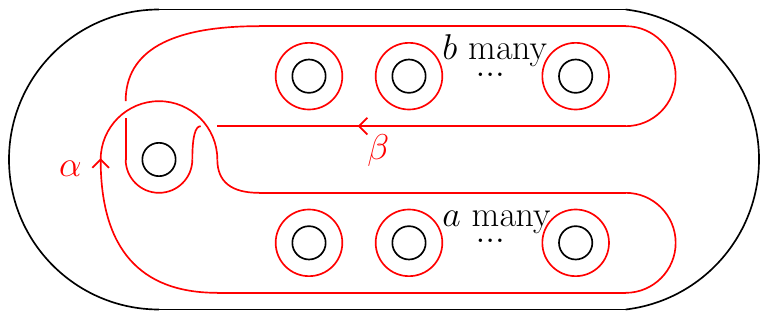}
    \caption{A planar supporting open book for the tight contact structures on $L(n,1)$. Here the monodromy is $D_\alpha^+\circ \varphi \circ D_\beta^+$, where $\varphi$ is the composition of the right-handed Dehn twists along the $a+b$ boundary components.}
    \label{fig::lensspaceOB2}
\end{figure}

\section{A Virtually Overtwisted Example}\label{sec::vot}

We now show that the equality \(bn(\xi^n_\rho)= n\) does not hold for all
tight contact structures on \(L(n,1)\). More precisely, we construct a planar open book with four binding components supporting $\xi^7_{-1}$; consequently, it also gives an upper bound for the binding number of $\xi^7_{+1}$, since these two contact structures are contactomorphic.

\begin{figure}[h]
    \centering
    \includegraphics[width=0.7\textwidth]{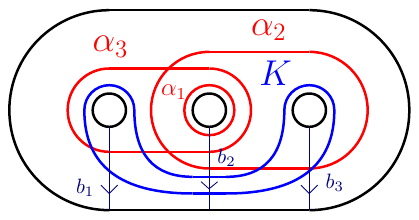}
    \caption{The four-holed sphere \(\Sigma\) with curves \(\alpha_1,\alpha_2,\alpha_3\), 
arc basis \(b_1,b_2,b_3\), and the curve \(K\). The open book
\((\Sigma,\varphi_0)\), where
\(\varphi_0=D^+_{\alpha_1}\circ D^+_{\alpha_3}\circ D^+_{\alpha_2}\) (read right to left),
supports \((S^3,\xi_{\mathrm{std}})\). The curve \(K\) is later used as the
surgery curve.}
    \label{fig::OB}
\end{figure}

Let \(\Sigma\) be the four-punctured sphere shown in Figure~\ref{fig::OB}, and let
\[
\varphi_0=D^+_{\alpha_1}\circ D^+_{\alpha_3}\circ D^+_{\alpha_2},
\] where the composition is read right to left.
The open book \((\Sigma,\varphi_0)\) supports \((S^3,\xi_{\mathrm{std}})\). Indeed, it is
obtained from the annular open book \((\operatorname{A},D_c^+)\) of
\((S^3,\xi_{\mathrm{std}})\) by two positive stabilizations. Let \(K\subset \Sigma\)
be the curve shown in Figure~\ref{fig::OB}. We first compute its classical
Legendrian invariants after Legendrian realization on the page.

\noindent\textbf{Claim.}
The Legendrian realization of \(K\) satisfies
\[
tb(K)=-6,\qquad \operatorname{rot}(K)=-1.
\]

\noindent\textit{Proof.}
We use the notation and formulas of Durst--Kegel recalled in Section~2.2, with
the arc basis shown in Figure~\ref{fig::OB}. For the factorization of
\(\varphi_0\), the matrix \(A\) is
\[
A=
\begin{pmatrix}
0&1&0\\
1&1&1\\
1&0&0
\end{pmatrix}.
\]
The corresponding vectors are
\[
\mathbf r=(0,0,0,1,1,0)^T,\qquad
\mathbf{l}=(-1,0,-1,-1,-1,0)^T.
\]
Solving \(Q\mathbf a=\mathbf l\) gives
\[
\mathbf a=(-3,2,-3,1,1,-2)^T.
\]
Therefore the Durst--Kegel formula for the Thurston--Bennequin invariant gives
\[
tb(K)
=
-\sum_{i=1}^3 (K\cdot b_i)^2-\langle \mathbf a, \mathbf l\rangle
=
-2-4
=
-6.
\]
Similarly,
\[
\operatorname{rot}(K)
=
r(K)-\langle \mathbf a, \mathbf r \rangle
=
1-2
=
-1.
\]
This proves the claim.
\hfill $\blacksquare$

It remains to identify the smooth knot type of \(K\).

\noindent\textbf{Claim.}
The knot \(K\) is smoothly isotopic to the unknot in \(S^3\).

\noindent\textit{Proof.}
Figure \ref{fig::surgerydiag} gives the Kirby diagram associated to the open book
\((\Sigma,\varphi_0)\), with the distinguished curve \(K\) drawn in blue.
Figures \ref{fig:kirby-unknot-first} and \ref{fig:kirby-unknot-second} give an explicit sequence of Kirby moves reducing the diagram of Figure \ref{fig::surgerydiag} to the empty surgery diagram of $S^3$, while carrying the blue curve $K$ along throughout the sequence. In the final panel of Figure \ref{fig:kirby-unknot-second}, no framed surgery components remain and the image of $K$ is visibly the standard unknot.
\hfill $\blacksquare$

\begin{figure}[h]
    \centering
    \includegraphics[width=0.6\textwidth]{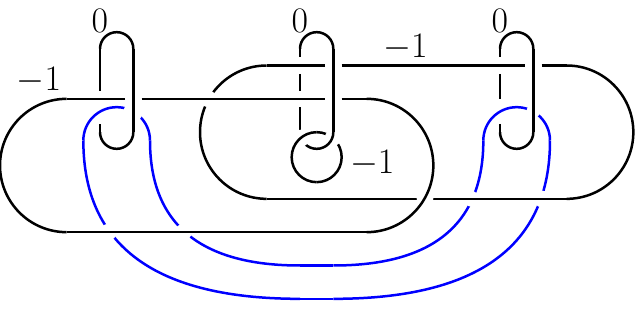}
    \caption{Surgery diagram for the manifold described by the open book decomposition $(\Sigma,\varphi_0)$. The distinguished blue curve is $K$.}
    \label{fig::surgerydiag}
\end{figure}

We can now finish the construction. Since \(K\) is a Legendrian unknot with
\(tb(K)=-6\) and \(\operatorname{rot}(K)=-1\), contact \((-1)\)-surgery along \(K\)
yields $(L(7,1),\xi^7_{-1}).$

Moreover, \(K\) lies on a page of the open book \((\Sigma,\varphi_0)\) supporting
\((S^3,\xi_{\mathrm{std}})\). By Proposition \ref{surgerytwist}, contact \((-1)\)-surgery along
\(K\) is supported by the open book
\(
(\Sigma,\varphi_0\circ D_K^+).
\)
Thus \((\Sigma,\varphi_0\circ D_K^+)\) is a planar open book with four binding
components supporting the tight contact $3-$manifold $(L(7,1),\xi^7_{-1}).$

\section{The lower bound}\label{sec::lower}

We now turn to the proof of Theorems \ref{thm::bn} and \ref{thm::bn56}.

\begin{proof}[Proof of Theorem~\ref{thm::bn}]
Let $(\Sigma,\varphi)$ be an abstract planar open book supporting the tight contact manifold $(L(n,1),\xi^n_\rho)$. By Lemma \ref{lemma:factorization} we may write $\varphi=D_{\alpha_d}^+ \circ \cdots \circ D_{\alpha_1}^+$ where each $\alpha_i$ is a closed curve in $\Sigma$, and $d$ is the number of boundary components of $\Sigma$. Choose an outer boundary component of $\Sigma$ and label the remaining
$d-1$ boundary components as inner boundary components, as in Figure \ref{fig::stdOB}. With the notation from Section \ref{sec::prelim}, we get the basis $\{\beta_1,\dots,\beta_{d-1}\}$ of $H_1(\Sigma)$.

Define the binary $(d-1)\times d$ matrix $A=(A_{ij})$ by $A_{ij}:=\alpha_j\cdot b_i$. Equivalently, the $j$-th column of $A$ records which inner boundary components are enclosed by $\alpha_j$. Thus, in $H_1(\Sigma)$, we have $\alpha_j=\sum_{k=1}^{d-1} A_{kj}\beta_k$. Thus $A_{kj}\in\{0,1\}$.\\

\noindent\textbf{Claim.}
    $\det (AA^T)=n$.

\noindent\textit{Proof.}
We use the standard computation of the first homology of a closed $3-$manifold from an abstract open book. With respect to the above choice of arcs, one has $ H_1(M_{(\Sigma,\varphi)}) \cong \langle  \beta_1,\cdots,\beta_{d-1} | \varphi_*(b_1)-b_1,\cdots,\varphi_*(b_{d-1})-b_{d-1} \rangle $ (see \cite{etnyre2006invariantscontactstructuresopen}). Here $\varphi_*(b_i)-b_i$ is naturally viewed as a homology class in  $H_1(\Sigma)$.

A positive Dehn twist along a curve $\alpha$ changes the relative homology class of an arc $b_i$ by $(D_\alpha^+)_*(b_i)=b_i+(\alpha\cdot b_i)\alpha.$ Since $\Sigma$ is planar, the algebraic intersection number of any two closed curves in $\Sigma$ is zero. Therefore the contributions of the Dehn twists simply add, and we obtain $\varphi_*(b_i)=b_i+\sum_{j=1}^d (\alpha_j\cdot b_i)\alpha_j .$ Using the expression of each $\alpha_j$ in the basis $\{\beta_1,\dots,\beta_{d-1}\}$, we get $$\varphi_*(b_i)-b_i=\sum_{j=1}^d A_{ij}\left(\sum_{k=1}^{d-1} A_{kj}\beta_k\right)=\sum_{k=1}^{d-1}\left(\sum_{j=1}^d A_{ij}A_{kj} \right)\beta_k=\sum_{k=1}^{d-1}(AA^T)_{ik}\beta_k.$$ Therefore the relations defining $H_1(M_{(\Sigma,\varphi)})$ are given precisely by the rows of the matrix $AA^T$. Since $AA^T$ is symmetric, the subgroup generated by these $d-1$ vectors is $\text{Im}(AA^T)$. Then $H_1(M_{(\Sigma,\varphi)})\cong \mathbb Z^{d-1}/\operatorname{Im}(AA^T).$ Since $(\Sigma,\varphi)$ supports $L(n,1)$, we have $H_1(M_{(\Sigma,\varphi)}) \cong H_1(L(n,1);\mathbb Z) \cong \mathbb Z/n\mathbb Z.$ In particular this group is finite of order $n$. Therefore $|\det(AA^T)|=n$ (see \cite[Ch.XIV, Ex.16]{LangAlgebra}) and $AA^T$ has full rank. Since $AA^T$ is positive definite, its determinant is positive. Thus $\det(AA^T)=n.$
This proves the claim.

\hfill $\blacksquare$

Following Section \ref{sec:invariants} we define the $(2d-1)\times (2d-1)$ matrix 
$$Q=\begin{pmatrix}0_{(d-1)\times (d-1)} & -A \\-A^T &  -A^TA-I_{d}\end{pmatrix}.$$ We now solve the system $$Q\begin{pmatrix}y\\x\end{pmatrix}=\begin{pmatrix}0\\r\end{pmatrix}$$ for $x\in\mathbb{Q}^{d},y\in\mathbb{Q}^{d-1}$ and $r\in\mathbb{Z}^d$ defined as $r_i=r(\alpha_i)$. In the notation of Section \ref{sec:invariants}, $(y,x)^T=\mathbf{b},(0,r)^T=\mathbf{r}$. The two equations we get are $Ax=0, r=-A^Ty-x$. Taking the dot product with $x$ gives $x\cdot r=-x\cdot A^Ty-x\cdot x=-x\cdot x.$ 

Consider the vector $v\in\mathbb{Z}^d$ given by $v_i=(-1)^i\det(A_i)$, where $A_i$ is the $(d-1)\times (d-1)$ matrix obtained from $A$ by deleting the $i$-th column. \\

\noindent\textbf{Claim.} $Av=0$

\noindent\textit{Proof.}
    Let $\text{a}_i\in \mathbb{Z}^d$ be the $i$-th row of $A$. $Av=0$ is equivalent to showing that $\text{row}_k\cdot v=0$ for each $k=1,\cdots,d-1$. Notice $\text{a}_k\cdot v=\sum_{i=1}^d A_{ki}(-1)^i\det(A_i)$. Consider the matrix $$\begin{pmatrix}\text{a}_k\\\text{a}_1\\ \vdots \\ \text{a}_{d-1}\end{pmatrix},$$ which has determinant zero. Expanding the determinant along the first row gives  $$\sum_{i=1}^d(-1)^{i+1}A_{ki}\det(A_i),$$ which proves our claim.
    
\hfill $\blacksquare$

The Cauchy–Binet formula \cite[Eq.~(3.14), p.~298]{TaoRMT} gives $n=\det(AA^T)=\sum_{i=1}^d \det(A_i)^2=\sum_{i=1}^d v_i^2$. Since $\ker A$ is one dimensional and $x\in\ker A$, there is a scalar $\lambda\in \mathbb{Q}$ such that $x=\lambda v$. 
By Lemma \ref{lemma::r}, each entry satisfies $r(\alpha_i)=|{\text{holes enclosed by }\alpha_i}|-1$, and therefore $r=A^T\mathbf{1}_{d-1}-\mathbf{1}_d$, where $\mathbf{1}_k$ represents the all-ones vector in $\mathbb{Z}^k$. To determine $\lambda$, take the dot product with $v$; $v\cdot r=v^T(-A^Ty-x)=-v\cdot x=-\lambda v\cdot v$ and hence $$\lambda=-\frac{v\cdot r}{v\cdot v}=-\frac{v^T(A^T\mathbf{1}_{d-1}-\mathbf{1}_d)}{n}=-\frac{-\sum_i v_i}{n}=\frac{S}{n},$$ where $S:=\sum_i v_i\in \mathbb{Z}$.

By the Durst--Kegel formula for the $d_3$ invariant one gets $$d_3(\xi)=-1+\frac{1}{4}x\cdot r-\frac{3}{4}\sigma(Q).$$

\noindent\textbf{Claim.}
$\sigma(Q)=-1$

\noindent\textit{Proof.}

Let $$P=\begin{pmatrix}0_{(d-1)\times d} & I_{d-1}\\I_d & 0_{d\times(d-1)}\end{pmatrix}.$$ Since $P$ is a permutation matrix and hence an orthogonal matrix, \(Q\) and \(P^TQP\) are congruent, so they have the same signature by Sylvester's law of inertia. A direct computation gives $$P^T Q P=\begin{pmatrix}-A^TA-I_d & -A^T\\ -A & 0_{(d-1)\times(d-1)}\end{pmatrix}.$$
Applying the Haynsworth inertia additivity formula \cite{HaynsworthOstrowski1968}, $$\sigma(P^TQP)=\sigma(-A^TA-I_d)+\sigma(A(A^TA+I_d)^{-1}A^T).$$
Since $-A^TA-I_d$ is negative definite, we have $\sigma(-A^TA-I_d)=-d$. If $z\in \mathbb{R}^{d-1}\setminus\{0\}$, then $z^TA(A^TA+I_d)^{-1}A^Tz=(A^Tz)^T(A^TA+I_d)^{-1}(A^Tz)>0$, since $(A^TA+I_d)^{-1}$ is positive definite and $A^Tz$ is nonzero since $A^T$ is injective (from $\det (AA^T)=n\neq0$).
Therefore $\sigma(Q)=-d+(d-1)=-1$.

\hfill $\blacksquare$

It follows that  
\begin{align*}
    d_3(\xi)&=-\frac{1}{4}(1-x\cdot r)=-\frac{1}{4}\left(1+\sum_ix_i^2\right)\\
    &=-\frac{1}{4}\left(1+\sum_i \left( \frac{S}{n}v_i\right)^2\right)=-\frac{1}{4}\left(1+\frac{S^2}{n^2}\sum_i v_i^2\right)\\
    &=-\frac{1}{4}\left(1+\frac{S^2}{n^2}n\right)=-\frac{1}{4}\left(1+\frac{S^2}{n}\right)
\end{align*}
We have seen in Section \ref{sec::lensspaces} that $d_3(\xi)=-\frac{1}{4}\left(1+\frac{\rho^2}{n}\right)$. Comparing the two expressions for $d_3(\xi)$ gives the identities
\begin{align*}
    |\sum_{i=1}^d v_i|&=|S|=|\rho|,\qquad \sum_{i=1}^dv_i^2=n
\end{align*}

We now use only the two identities above. Replacing $v$ by $-v$ if necessary, assume $S\ge 0$. Since every column of $A$ is nonzero and all entries of $A$ are nonnegative and $Av=0$, it follows that $v$ must have at least one positive and at least one negative entry. Let $m$ be the number of non-zero entries in $v$, hence $m\leq d$. Define $$\mathcal{P}:=\sum_{v_i>0}v_i, \qquad \mathcal{N}:=-\sum_{v_i<0}v_i.$$ Then $\mathcal{P}-\mathcal{N}=S$. Since $v$ has a negative entry, it follows that $\mathcal{N}\geq 1$ and hence $\mathcal{P}\geq S+1$. We have $$n=\sum_{v_i>0}v_i^2+\sum_{v_i<0}v_i^2\geq \sum_{v_i>0}v_i^2+1.$$ By Cauchy-Schwarz $$(S+1)^2\leq \mathcal{P}^2\leq \#\{i:v_i>0\}\sum_{v_i>0}v_i^2\leq (m-1)(n-1)\leq (d-1)(n-1)$$
and hence $$d \geq \left\lceil 1+\frac{(S+1)^2}{n-1}\right\rceil=\left\lceil 1+\frac{(|\rho|+1)^2}{n-1}\right\rceil$$

\end{proof}

\begin{proof}[Proof of Theorem \ref{thm::bn56}]
\leavevmode\par

\textbf{Case $n=2$.} Alexander's trick shows that the mapping class group of the disk is trivial, hence the only $3-$manifold supported by an open book with disk pages can be $S^3$. It follows that $\operatorname{bn}(\xi^2_\rho)\geq2$. The reverse inequality was established in Section \ref{sec::upper}.

\textbf{Case $n=3$.} As in the previous case, we have $\operatorname{bn}(\xi^3_\rho)\geq2$, while we showed earlier that
$\operatorname{bn}(\xi^3_\rho)\leq 3$. By Lemma \ref{lemma:factorization}, a supporting planar open book with two boundary components would have to have monodromy equal to a product of two right-handed Dehn twists. Since there is a unique closed nontrivial curve in the annulus, namely the core, the monodromy can only be $D^+_c\circ D^+_c$, where $c$ denotes the core of the annulus. But we have seen in Section \ref{sec::upper} that this supports the tight $L(2,1)$. Hence $\operatorname{bn}(\xi^3_\rho)=3$ follows. 

\textbf{Case $n=4$.}  
Assume $\xi^4_\rho$ admits a planar open book with $d$ boundary components. By Lemma \ref{lemma:factorization} we know that the monodromy may be factorized into either $d-1$ or $d$ right-handed Dehn twists. If $d=2$ then this is either the annular open book for the tight $S^3$ or the open book for the tight $L(2,1)$. Hence only the case $d=3$ remains. Here we adapt the argument in the proof of \cite[Theorem 5.2]{etnyre2006invariantscontactstructuresopen}. Assume we have a supporting planar open book with $3$ boundary components. Since the page is a pair of pants, every essential simple closed curve is boundary-parallel. Hence every nontrivial positive Dehn twist is isotopic to a twist about one of the three curves $\gamma_1,\gamma_2,\gamma_3$ shown in Figure \ref{fig::3bnds} and the supporting open book must have monodromy $D_{\gamma_3}^{n_3}\circ D_{\gamma_2}^{n_2}\circ D_{\gamma_1}^{n_1}$ where $n_1,n_2,n_3\in \mathbb{Z}_{\geq 0}$ and $n_1+n_2+n_3$ is equal to either $2$ or $3$.

Let $A$ be the integer matrix associated to this factorization in the
sense of Durst--Kegel. Its columns consist of $n_1$ copies of
\(\binom{1}{0}\), \(n_2\) copies of \(\binom{0}{1}\), and \(n_3\) copies of
\(\binom{1}{1}\). One may compute that \[
AA^T
=
\begin{pmatrix}
n_1+n_3 & n_3\\
n_3 & n_2+n_3
\end{pmatrix}
\]

and hence $\det (AA^T)=n_1n_2+n_1n_3+n_2n_3$. As observed in the proof of Theorem \ref{thm::bn},  we must have $\det (AA^T)=|H_1(L(4,1))|=4$. However, when $n_1+n_2+n_3\in \{2,3\}$, one has $n_1n_2+n_1n_3+n_2n_3\leq 3$, which is a contradiction. Thus, no supporting planar open book with two
or three binding components exists. Together with the previously established
upper bound, this proves that
$\operatorname{bn}(\xi_\rho^{4})=4.
$

\begin{figure}[h]
    \centering
    \includegraphics[width=0.6\textwidth]{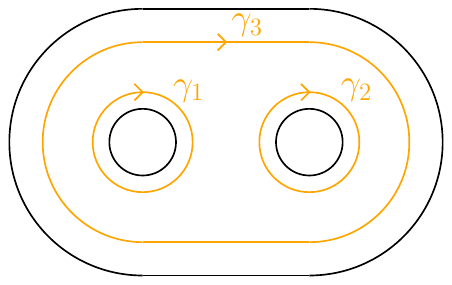}
    \caption{A planar open book with $3$ boundary components must have monodromy $D_{\gamma_3}^{n_3}\circ D_{\gamma_2}^{n_2}\circ D_{\gamma_1}^{n_1}$ for some $n_1,n_2,n_3\in \mathbb{Z}$.}
    \label{fig::3bnds}
\end{figure}

We now turn to the cases where $n \geq 5$. Throughout, let \((\Sigma,\varphi)\) be a planar open book supporting a tight contact structure on \(L(n,1)\) with \(d\) binding components. Define $A\in \{0,1\}^{(d-1)\times d}$ and $v\in\mathbb{Z}^d$ as in the proof of Theorem \ref{thm::bn}. Recall that we have \[Av=0,\qquad \sum_{i=1}^d v_i^2=n, \qquad \sum_{i=1}^d v_i=|\rho|, \qquad \det(AA^T)=n.\] We use these four identities to prove the rest of the cases.

\noindent\textbf{Case $n=5$.}
Suppose, for contradiction, that \(d\leq 4\). Since $\sum_{i=1}^d v_i^2=5,$ the nonzero entries of \(v\), after multiplying \(v\) by \(-1\) and permuting the columns, must be either $(2,1)$ or $(2,-1).$ Let \(C_j\) denote the \(j\)-th column of \(A\). In the first case, the equation \(Av=0\) gives $2C_1+C_2=0.$ Since all entries of \(C_1\) and \(C_2\) are nonnegative, this forces
\(C_1=C_2=0\), contradicting the fact that every column of \(A\) is nonzero. In the second case, the equation \(Av=0\) gives $2C_1-C_2=0$. Since \(C_1\) and \(C_2\) are binary columns, this
is possible only if \(C_1=C_2=0\), again contradicting the nonzero-column condition.

Therefore no planar supporting open book for a tight contact structure on
\(L(5,1)\) can have \(d\leq 4\) binding components. Hence $\operatorname{bn}(\xi^5_\rho)=5$ follows.

\noindent\textbf{Case $n=6$.}
Suppose, for contradiction, that \(d\leq 5\). Since $\sum_{i=1}^d v_i^2=6,$
and since \(d\leq 5\), the vector \(v\) cannot have six nonzero entries all equal
to \(\pm 1\). Hence, after multiplying by \(-1\) and permuting the columns, the
nonzero entries of \(v\) must be one of
\[
        (2,1,1), \qquad (2,1,-1), \qquad (2,-1,-1).
\]
In the first case, \(Av=0\) gives $2C_1+C_2+C_3=0.$ Since all entries are nonnegative, this forces \(C_1=C_2=C_3=0\), contradicting the nonzero-column condition.

In the second case, \(Av=0\) gives $2C_1+C_2-C_3=0.$ This is also impossible, since then $C_3$ would necessarily have an entry larger than $1$. It remains to consider the third case. Then \(Av=0\) gives $2C_1-C_2-C_3=0.$ Since all entries are $0$ or $1$, it would follow that $C_1=C_2=C_3.$ Thus the first three columns span at most a one-dimensional subspace. Together with the remaining $d-3$ columns, they span a subspace of dimension at most $d-2$. But from $\det(AA^T)=n\neq 0$ we see that $A$ has full row rank and hence \(\operatorname{rank}(A)=d-1\), which is a contradiction.

Therefore no planar supporting open book for a tight contact structure on
\(L(6,1)\) can have \(d\leq 5\) binding components. Hence we again have $\operatorname{bn}(\xi^6_\rho)=6$.

\noindent\textbf{Case $n\geq 7$.} For $\rho=\pm (n-2)$, we are finished by Corollary \ref{cor::untight}. Assume next that \(|\rho|=n-4\). Suppose, for contradiction, that
\(d\le n-1\). After replacing \(v\) by \(-v\), if necessary, we may
assume that \(\sum_i v_i=n-4\). Hence $\sum_i (v_i^2-v_i)=4 .$
Since \(v_i^2-v_i\) is equal to \(0\) for \(v_i=0,1\), equal to \(2\)
for \(v_i=2,-1\), and at least \(6\) for any other value of $v_i$, the nonzero
entries of \(v\), up to permutation and omitting zero entries, must be one
of
\[
        (2,2,\underbrace{1,\ldots,1}_{n-8}),
        \qquad
        (2,\underbrace{1,\ldots,1}_{n-5},-1),
        \qquad
        (\underbrace{1,\ldots,1}_{n-2},-1,-1),
\]
where the first possibility occurs only when \(n\ge 8\).

The first and second possibilities are impossible because $Av=0$ would yield the equation 
$2C_1+2C_2+C_3+\cdots+C_{n-6}=0$ or $2C_1+C_2+\cdots+C_{n-4}-C_{n-3}=0$, both of which are impossible since the entries of $C_i$ are $0$ or $1$ and all $C_i$ are nonzero. The third possibility contradicts $d<n$. Since the upper bound \(d=n\) was already constructed, we get
\[\operatorname{bn}(\xi_\rho^n)=n\qquad\text{for }|\rho|=n-4.\]

It remains to consider \(|\rho|=n-6\). We prove that \(d\ge n-3\). Suppose,
for contradiction, that \(d\le n-4\). Again assume
\(\sum_i v_i=n-6\). Then $\sum_i (v_i^2-v_i)=6 .$

Thus, up to permutation and omitting zero entries, the nonzero entries of
\(v\) must be one of
\[
\begin{array}{lll}
\bigl(3,\underbrace{1,\ldots,1}_{n-9\ \mathrm{times}}\bigr),&
\bigl(-2,\underbrace{1,\ldots,1}_{n-4\ \mathrm{times}}\bigr),&
\bigl(2,2,2,\underbrace{1,\ldots,1}_{n-12\ \mathrm{times}}\bigr),\\[0.8em]
\bigl(2,2,\underbrace{1,\ldots,1}_{n-9\ \mathrm{times}},-1\bigr),&
\bigl(2,\underbrace{1,\ldots,1}_{n-6\ \mathrm{times}},-1,-1\bigr),&
\bigl(\underbrace{1,\ldots,1}_{n-3\ \mathrm{times}},-1,-1,-1\bigr).
\end{array}
\]

Again, some of these only make sense when $n$ is large enough. The second, fifth, and sixth possibilities have respectively \(n-3\), \(n-3\), and \(n\) nonzero entries, contradicting \(d\le n-4\). The first, third and fourth possibilities are impossible because the equations
\begin{align*}
        3C_1+C_2+C_3+\cdots+C_{n-8}&=0,\\
        2C_1+2C_2+2C_3+C_4+\cdots+C_{n-9}&=0,\\
        2C_1+2C_2+C_3+\cdots+C_{n-7}-C_{n-6}&=0
\end{align*}
cannot be solved for nonzero vectors $C_i$ with entries $0$ or $1$.

Therefore no planar open book supporting \(\xi_\rho^n\) can have
\(d\le n-4\) binding components. Hence \(d\ge n-3\). 

We now show the existence of a supporting open book with $d=n-3$. Consider the supporting planar open book for the tight $S^3$ as in Section \ref{sec::vot}, which has $4$ boundary components and contains a Legendrian unknot $K$ in its
 page with $\operatorname{tb}(K)=-6$ and $\operatorname{rot}(K)=-1$. We may now positively stabilize this open book $n-7$ times as in Proposition \ref{curveStab} such that the resulting open book admits a Legendrian unknot $K'$ in its page with $\operatorname{tb}(K')=-(n-1)$ and $\operatorname{rot}(K')=-(n-6)$. Now performing a contact $(-1)$-surgery along $K'$ yields $(L(n,1),\xi^n_{-(n-6)})$ with a supporting planar open book with $4+n-7=n-3$ binding components. Since $(L(n,1),\xi^n_{-(n-6)})$ and $(L(n,1),\xi^n_{+(n-6)})$ are contactomorphic (as explained in Section \ref{sec::lensspaces}), we have shown $$\operatorname{bn}(\xi_\rho^n)=n-3 \qquad\text{for }|\rho|=n-6.$$

\end{proof}

\appendix

\section{Kirby-calculus verification that \(K\) is an unknot}

In this appendix we record the Kirby moves used in Section \ref{sec::vot} to verify that the curve \(K\) in Figure \ref{fig::surgerydiag} is smoothly isotopic to the unknot. Throughout the sequence, the blue curve denotes the image of \(K\). The framed components are modified by isotopy, handle slides, blow-downs, and cancellations, while the blue curve is carried along by the induced diffeomorphisms of the surgered $3$-manifold. The first part of the reduction is shown in Figure \ref{fig:kirby-unknot-first}, and the remaining moves are shown in Figure \ref{fig:kirby-unknot-second}.

\newpage

\begin{figure}[h]
    \centering
    \includegraphics[width=0.85\textwidth]{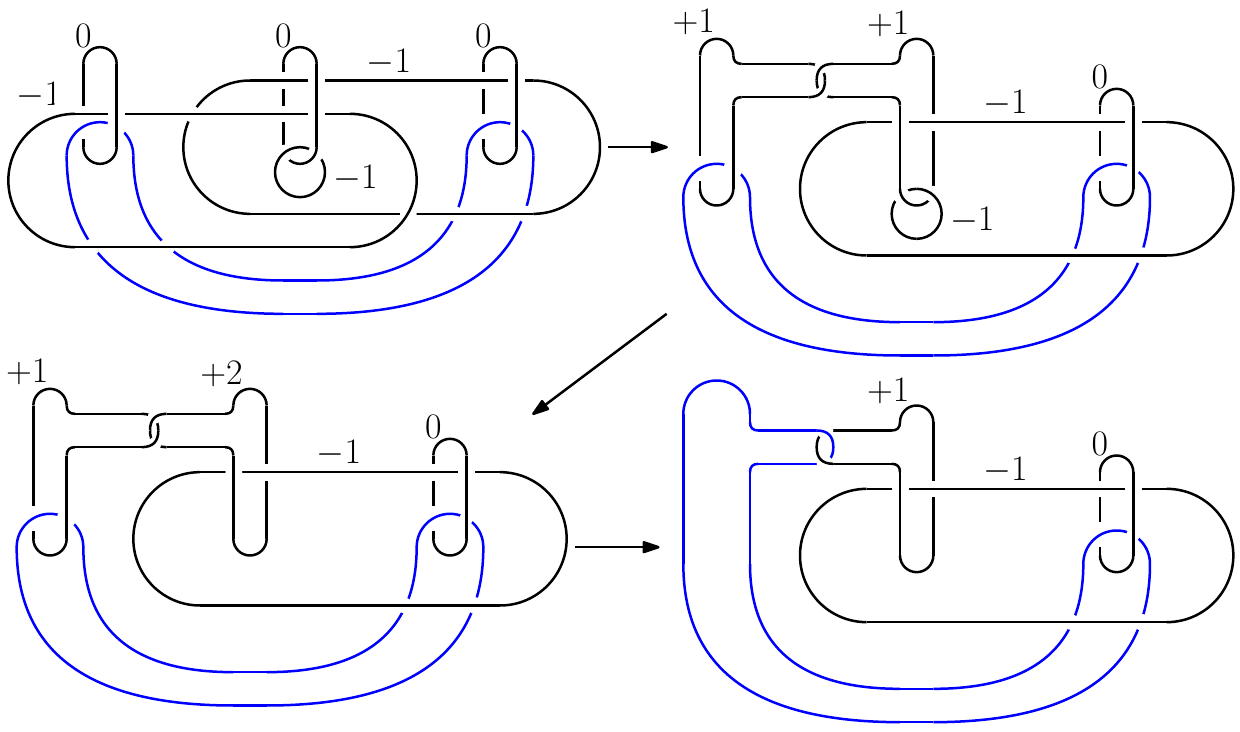}
    \caption{The first part of the Kirby-calculus reduction of the diagram in Figure \ref{fig::surgerydiag}. The first move is a blow-down of the leftmost $-1$-framed component, the second is a blow-down of the inner $-1$-framed component, and the third is a blow-down of the leftmost $+1$-framed component.}
    \label{fig:kirby-unknot-first}
\end{figure}
\begin{figure}[h]
    \centering
    \includegraphics[width=0.8\textwidth]{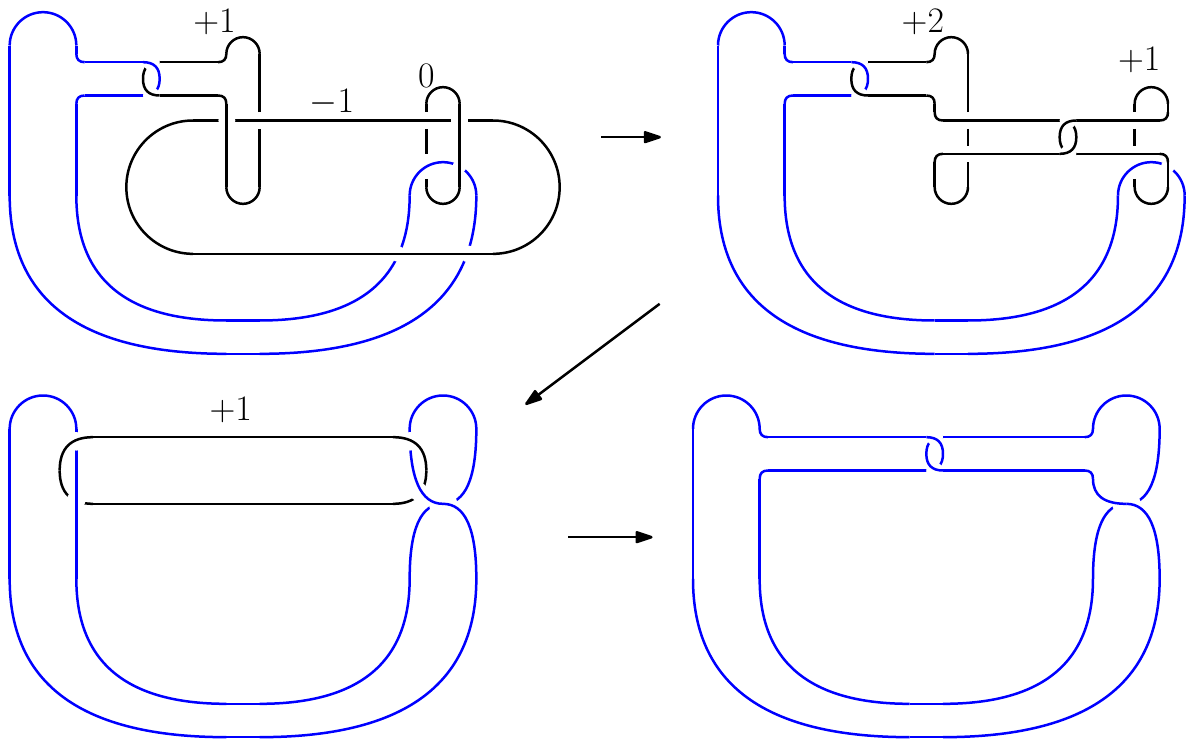}
    \caption{Continuation of the sequence from Figure~\ref{fig:kirby-unknot-first}. The first move is a blow-down of the remaining $-1$-framed component, the second is a blow-down of the remaining $+1$-framed component, and the third is a blow-down of the remaining $+1$-framed component.
     The image of \(K\) is visibly the standard unknot.}
    \label{fig:kirby-unknot-second}
\end{figure}

\FloatBarrier

\bibliographystyle{alpha}
\bibliography{sample}
\end{document}